%%%% 2018.03.31. 

\documentclass[12pt]{amsart}

\setlength{\oddsidemargin}{-.3cm}
\setlength{\evensidemargin}{-.6cm}
\setlength{\topmargin}{-1cm}
\setlength{\textheight}{23cm}
\setlength{\textwidth}{460pt}

\newtheorem{theorem}{Theorem}[section]
\newtheorem{lemma}[theorem]{Lemma}
\newtheorem{proposition}[theorem]{Proposition}
\newtheorem{corollary}[theorem]{Corollary}

\theoremstyle{definition}

\newtheorem{remark}[theorem]{Remark}

\newenvironment{proofof}[1]{\noindent{\it Proof of
#1.}}{\hfill$\square$\\\mbox{}}

%\theoremstyle{remark}

%%%    OUR MACROS

\def\Z{{\mathbb{Z}}}
\def\F{{\mathbb{F}}}
\def\N{{\mathbb{N}}}
\def\free{{\mathcal{F}}}
\def\zfree{{\mathcal{Z}}}
\def\bfree{{\mathcal{B}}}
\def\mfree{{\mathcal{M}}}
\def\conc{{C}}
\def\tr{{\mathrm{Tr}}}
%========================================================

\usepackage{amscd,amssymb}
%=====================================================

\begin{document}

\title[Polynomial bound for the  nilpotency index of nil algebras]
{Polynomial bound for the nilpotency index of  finitely generated nil algebras}

\author[M. Domokos]
{M. Domokos}
\address{MTA Alfr\'ed R\'enyi Institute of Mathematics,
Re\'altanoda utca 13-15, 1053 Budapest, Hungary}
\email{domokos.matyas@renyi.mta.hu}

\thanks{This research was partially supported by National Research, Development and Innovation Office,  NKFIH K 119934.}

\subjclass[2010]{Primary: 16R10  Secondary: 16R30, 13A50, 15A72.}

\keywords{nil algebra, nilpotent algebra, matrix invariant, degree bound}

\begin{abstract} Working over an infinite field of positive characteristic, an upper 
bound is given for the nilpotency index of a finitely generated nil algebra of bounded nil index $n$ in terms of the maximal degree in a minimal homogenous generating system of the ring of simultaneous conjugation invariants of tuples of $n$ by $n$ matrices. This is deduced from a result of Zubkov. 
As a consequence, a recent degree bound due to Derksen and Makam for the generators of the ring of matrix invariants yields an upper  bound for the nilpotency index of a finitely generated nil algebra that is polynomial in the number of generators and the nil index. 
Furthermore, a characteristic free treatment is given to  Kuzmin's  
lower bound for the nilpotency index. 
\end{abstract}

\maketitle

%%%%%%%%%%%%%%%%%%%%%%%%%%%%%%%

\section{Introduction}\label{sec:intro}

Throughout this note $\F$ stands for  an infinite field of positive characteristic. All vector spaces, tensor products, algebras  are taken over $\F$.  
The results of this paper are valid in arbitrary characteristic, but they are known in characteristic zero (in fact stronger statements hold in characteristic zero, see Formanek \cite{formanek}, giving in particular an account of relevant works of Razmyslov \cite{razmyslov} and Procesi \cite{procesi}).  

Write 
$\free_m:=\F\langle x_1,\dots,x_m\rangle$ for the free associative $\F$-algebra with identity $1$ on $m$ generators $x_1,\dots,x_m$, 
and let $\free_m^+$ be its ideal generated by $x_1,\dots,x_m$ (so $\free_m^+$ is the free non-unitary associative algebra of rank $m$). 
For a positive integer $n$ denote by $I_{n,m}$ the ideal in $\free_m$ generated by $\{a^n\mid a\in \free_m^+\}$. 
A theorem of Kaplansky \cite{kaplansky} asserts that if a finitely generated associative algebra satisfies the polynomial identity $x^n=0$, then it is nilpotent. 
Equivalently, there exists a positive integer $d$ such that for all $i_1,\dots,i_d\in\{1,\dots,m\}$ the monomial  
$x_{i_1}\cdots x_{i_d}$ belongs to $I_{n,m}$. Denote by 
$d_{\F}(n,m)$ the minimal such $d$. In other words, $d_{\F}(n,m)$ is the minimal positive integer $d$ such that all $\F$-algebras that are generated by $m$ elements and satisfy the polynomial identity $x^n=0$ satisfy also the polynomial identity $y_1\cdots y_d=0$. 
This is a notable quantity of noncommutative ring theory: 
Jacobson \cite{jacobson} reduced the Kurosh problem for finitely generated algebraic algebras of bounded degree to the case of nil algebras of bounded degree. 
We mention also that proving nilpotency of nil rings under various conditions 
is a natural target for ring theorists, see for example the paper of Guralnick, Small and Zelmanov \cite{guralnick-small-zelmanov}.

The number $d_{\F}(n,m)$ is tightly connected with a quantity appearing in commutative invariant theory defined as follows. Consider the generic matrices 
\[X_r=(x_{ij}(r))_{1\le i.j\le n}, \quad r=1,\dots,m.\] 
These are elements in the algebra $A^{n\times n}$ of $n\times n$ matrices over the 
$mn^2$-variable commutative polynomial algebra 
$A=\F[x_{ij}(r)\mid 1\le i,j\le n,\ 1\le r\le m]$. 
The general linear group $GL_n(\F)$ acts on $A$ via $\F$-algebra automorphisms: for 
$g\in GL_n(\F)$ we have that $g\cdot x_{ij}(r)$ is the $(i,j)$-entry of the matrix 
$g^{-1}X_rg$. Set $R_{n,m}=A^{GL_n(\F)}$, the subalgebra of $GL_n(\F)$-invariants. 
This is the algebra of polynomial invariants under simultaneous conjugation of $m$-tuples of $n\times n$ matrices. The polynomial ring $A$ is graded in the standard way, and since the $GL_n(\F)$-action preserves the grading, the subalgebra $R_{n,m}$ is generated by homogeneous elements. Being the algebra of invariants of a reductive group, 
$R_{n,m}$ is finitely generated by the Hilbert-Nagata theorem (see for example \cite{newstead}).  We write $\beta_{\F}(n,m)$ for the minimal positive integer $d$ such that 
the $\F$-algebra $R_{n,m}$ is generated by elements of degree at most $d$. 
The main result of the present note is the following inequality: 

\begin{theorem}\label{thm:main} 
We have the inequality 
\[d_{\F}(n,m)\le \beta_{\F}(n,m+1).\]
\end{theorem}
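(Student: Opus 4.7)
The plan is to translate the combinatorial statement ``$M \in I_{n,m}$'' (for a length-$\beta$ monomial $M$, where $\beta := \beta_\F(n,m+1)$) into a decomposability statement in the graded invariant ring $R_{n,m+1}$, using the trace pairing $f \mapsto \tr(X_{m+1}\,f(X_1,\dots,X_m))$. The auxiliary variable $X_{m+1}$ plays the role of a ``dual vector'' that linearises a monomial in the generic matrices into a single $GL_n(\F)$-invariant, at the cost of raising the matrix count by one.

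The argument then has two steps. Step 1 (easy, purely degree-theoretic): for any length-$\beta$ monomial $M$, the element $\tr(X_{m+1}\,M(X_1,\dots,X_m))$ is a homogeneous invariant in $R_{n,m+1}$ of degree $\beta+1$. By the definition of $\beta$, the graded $\F$-algebra $R_{n,m+1}$ is generated by homogeneous elements of degree at most $\beta$; hence every homogeneous element of strictly greater degree is an $\F$-linear combination of monomials in the generators, each of which has at least two factors of positive degree. In particular, $\tr(X_{m+1}\,M)$ lies in the square $(R_{n,m+1}^+)^2$ of the augmentation ideal. Step 2 (deep, supplied by Zubkov): invoke the result announced in the abstract, which I expect to take the form of a characterization of $I_{n,m}$ via the above pairing---roughly, $f \in I_{n,m}$ if and only if $\tr(X_{m+1}\,f(X_1,\dots,X_m)) \in (R_{n,m+1}^+)^2$. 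Applied to $M$, Step 1 supplies the right-hand side, and Zubkov's characterization then forces $M \in I_{n,m}$. Since $M$ was an arbitrary length-$\beta$ monomial, this yields $d_\F(n,m) \le \beta_\F(n,m+1)$.

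The main obstacle is Step 2, i.e.\ the Zubkov ingredient. One direction is a routine Cayley--Hamilton calculation: a defining generator $a^n$ of $I_{n,m}$ rewrites as $\sum_{j\ge 1}(-1)^{j+1}c_j(a)\,a^{n-j}$, where the $c_j(a)$ are the coefficients of the characteristic polynomial of the matrix obtained by substituting generic matrices into $a$---each a homogeneous invariant of positive degree---so $\tr(X_{m+1}\, g a^n g')$ expands as a sum of products of two positive-degree invariants. The converse, that this is the only source of decomposability on the invariant side, is the substantive content: in positive characteristic one needs the Donkin--Zubkov generators of $R_{n,m+1}$ (coefficients of characteristic polynomials of all monomials in $X_1,\dots,X_{m+1}$, not merely pure traces, which would suffice in characteristic zero) and Zubkov's description of the relations among them, in place of the Razmyslov--Procesi trace-identity machinery. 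Granting that this piece can be invoked cleanly---and taking suitable care to read the conclusion inside $\free_m$ rather than modulo matrix identities---the degree count above completes the proof.
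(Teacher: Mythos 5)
Your Step 1 is exactly the paper's opening move: pair the monomial $M=X_{i_1}\cdots X_{i_d}$ with an auxiliary generic matrix $X_{m+1}$, note that $\tr(X_{m+1}M)$ is homogeneous of degree $\beta_{\F}(n,m+1)+1$, and conclude it lies in $(R_{n,m+1}^+)^2$; your Cayley--Hamilton remark likewise gives the easy direction of your proposed criterion. The gap is Step 2. The result of Zubkov that the paper relies on is not a trace-pairing characterization of $I_{n,m}$: it is a presentation of the kernel of the map $\varphi:\mathcal{P}_{n,m}\otimes\free_m\to A^{n\times n}$ (matrices with forms), namely that $\ker(\varphi)$ is generated by $\ker(\varphi_2)\otimes 1$ together with the Cayley--Hamilton elements $\chi_n(a)=\sum_{l=0}^n(-1)^l s_l(a)\otimes a^{n-l}$. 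The implication you need --- $\tr(X_{m+1}f(X_1,\dots,X_m))\in (R_{n,m+1}^+)^2$ forces $f\in I_{n,m}$ --- is true, but it is essentially the content of this paper's proof rather than a citable off-the-shelf statement, so invoking it as a black box assumes the heart of the theorem.

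Concretely, two arguments are missing between your Step 1 and the conclusion. First, one must pass from decomposability of the invariant $\tr(X_{m+1}M)$ to a statement about the matrix $M(X_1,\dots,X_m)$ itself: using Donkin's generators $\sigma_l(W)$ and the $\Z^{m+1}$-multigrading, in each product of at least two generators exactly one factor involves $X_{m+1}$, and since $\deg_{m+1}(\sigma_l(W))=l\cdot\deg_{m+1}(W)$ that factor must have $l=1$, i.e.\ be of the form $\tr(X_{m+1}Z)$ with $Z$ a word in $X_1,\dots,X_m$; this rewrites the relation as $\tr\bigl(X_{m+1}(M-\sum_{\lambda}a_\lambda g_\lambda)\bigr)=0$ with $g_\lambda\in R_{n,m}^+\conc_{n,m}$, and substituting $X_{m+1}\mapsto E_{ij}$ (nondegeneracy of the trace pairing) yields the matrix identity $M(X_1,\dots,X_m)=\sum_\lambda a_\lambda g_\lambda\in R_{n,m}^+\conc_{n,m}$. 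Second, one needs that $\{f\in\free_m\mid f(X_1,\dots,X_m)\in R_{n,m}^+\conc_{n,m}\}$ is exactly $I_{n,m}$ (the paper's Corollary 2.3); this is where Zubkov's presentation actually enters, via the observation that $1\otimes a^n\equiv\chi_n(a)$ modulo the ideal generated by the elements $s_l(a)\otimes 1$, so that $\varphi^{-1}(R_{n,m}^+\conc_{n,m})=\ker(\varphi)+(s_l(a)\otimes 1)=(s_l(a)\otimes 1,\ 1\otimes a^n)$, whose intersection with $\free_m$ is $I_{n,m}$. With these two pieces supplied, your outline coincides with the paper's proof; without them, the decisive implication remains unproved.
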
 

\begin{remark}\label{remark:mathpann} 
In the reverse direction it was shown in \cite[Theorem 3]{domokos} that for $n\ge 2$ we have 
\[\beta_{\F}(n,m)\le \lfloor \frac  n2\rfloor  d_{\F}(n,m).\] 
\end{remark} 

Theorem~\ref{thm:main} is derived from a  theorem of Zubkov \cite{zubkov} 
(for which Lopatin \cite{lopatin:jpaa} gave versions and improvements), 
see Theorem~\ref{thm:zubkov}. 
Using a result of Ivanyos, Qiao and Subrahmanyam \cite{ivanyos_etal:1}, Derksen and Makam \cite{derksen_makam:1} found strong bounds on the degrees of invariants defining the null-cone of $m$-tuples of $n\times n$ matrices 
under simultaneous conjugation, and derived from this the following upper bound on $\beta_{\F}(n,m)$: 

\begin{theorem}\label{thm:derksen_makam} (Derksen and Makam \cite[Theorem 1.4]{derksen_makam:2})  
We have the inequality 
\[\beta_{\F}(n,m)\le (m+1)n^4.\] 
\end{theorem}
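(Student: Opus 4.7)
The plan is to reduce the problem in two steps from modern invariant theory: first, bound the degree $D$ at which homogeneous invariants scheme-theoretically cut out the null-cone $\mathcal{N}_{n,m}\subset(\F^{n\times n})^m$ for simultaneous conjugation; then convert this to a generating degree for $R_{n,m}$. By the Hilbert--Mumford criterion together with classical results going back to Artin, $(A_1,\dots,A_m)\in\mathcal{N}_{n,m}$ if and only if the associative subalgebra of $\F^{n\times n}$ generated by $A_1,\dots,A_m$ is nilpotent, equivalently every trace of a word in $A_1,\dots,A_m$ vanishes.

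For the reduction step I would invoke Derksen's theorem for linearly reductive group actions: if homogeneous invariants of degree at most $D$ cut out $\mathcal{N}_{n,m}$ scheme-theoretically, then $R_{n,m}$ is generated in degree at most an explicit polynomial expression in $D$ and in the ambient representation dimension $mn^2$. This kind of passage from null-cone to generators is now classical; however, to achieve the advertised linear dependence on $m+1$ in the final bound, one must track carefully how the parameter $m$ enters Derksen's estimate, presumably exploiting that increasing $m$ by one adds $n^2$ dimensions to the representation in a highly structured way.

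The second, deeper step is to produce homogeneous conjugation invariants of degree $O(n^3)$ defining $\mathcal{N}_{n,m}$. For this I would use the polynomial bound of Ivanyos, Qiao and Subrahmanyam \cite{ivanyos_etal:1} on the noncommutative rank of symbolic matrices: their second Wong sequence / blow-up construction associates to any tuple whose generated subalgebra is non-nilpotent a determinantal polynomial of degree polynomial in $n$ that is nonzero on the given tuple; symmetrising under simultaneous conjugation converts such a determinant into a trace-polynomial invariant of comparable degree $O(n^3)$ detecting non-nullity. Substituting $D=O(n^3)$ into the output of the first step should then yield $\beta_{\F}(n,m)\le (m+1)n^4$.

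The main obstacle is unquestionably this second step. Classical Popov--Hilbert-type degree bounds for null-cone-defining invariants in this setting are exponential in $n$, and the polynomial-in-$n$ bound is genuinely deep, relying essentially on the algorithmic-combinatorial analysis of shrunk subspaces and noncommutative rank in \cite{ivanyos_etal:1}. By contrast, the invariant-theoretic repackaging in the reduction step, while requiring attention to constants to recover the factor $m+1$, is comparatively routine once the null-cone degree is under control.
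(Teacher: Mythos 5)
You should first note that the paper itself contains no proof of this statement: it is quoted verbatim from Derksen and Makam \cite{derksen_makam:2}, and the introduction only records the outline of their argument (the Ivanyos--Qiao--Subrahmanyam results give degree bounds for invariants cutting out the null cone, from which the generation bound is derived). Your sketch follows that outline at the top level, but its two quantitative ingredients do not combine to give $(m+1)n^4$, and the key reduction tool you invoke is unavailable in the paper's setting. Derksen's theorem converting a null-cone degree $D$ into a generating degree requires the group to be linearly reductive; here $\F$ has positive characteristic, and $GL_n(\F)$ is reductive but not linearly reductive, so that theorem does not apply at all. Moreover, even in characteristic zero Derksen's bound is quadratic in $D$ (of the shape $\tfrac{3}{8}\dim(V)\,D^2$ with $\dim V = mn^2$), so with any $D$ of order $n^2$ or $n^3$ it would yield $O(mn^6)$ or worse, never a bound linear in $D$ of the form $(m+1)n^4$. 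What Derksen and Makam actually use is that $R_{n,m}$ is Cohen--Macaulay in arbitrary characteristic (via Donkin's good filtration and Hashimoto's theorem); once the null cone is cut out by invariants of degree at most $D$, one extracts a homogeneous system of parameters of roughly $(m-1)n^2+1$ elements each of degree at most $D$, and a Hilbert-series/$a$-invariant argument then bounds generator degrees by about the sum of the hsop degrees, i.e.\ linearly in $D$.

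This makes your second step quantitatively insufficient as well: even fed into the linear mechanism, $D=O(n^3)$ only gives $O(mn^5)$; the bound $(m+1)n^4$ needs $D=O(n^2)$, which is exactly what Derksen and Makam extract from the IQS noncommutative-rank machinery (through matrix semi-invariants and tensor blow-ups), and this sharper null-cone bound is the genuinely deep input. Two further characteristic-$p$ slips: for $0<\mathrm{char}(\F)\le n$, membership of $(A_1,\dots,A_m)$ in the null cone is \emph{not} equivalent to the vanishing of all traces of words (one must use all coefficients $\sigma_l$ of characteristic polynomials of words), and ``symmetrising a determinant into a trace polynomial'' presupposes a Reynolds operator, which again does not exist for $GL_n$ in positive characteristic. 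As written, the proposal would not establish the stated inequality; it needs to be rerouted through the Cohen--Macaulay/hsop argument and the $O(n^2)$ null-cone degree bound of \cite{derksen_makam:2}, with \cite{ivanyos_etal:1} entering only through that route.
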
 

Given this result Derksen and Makam \cite[Conjecture 1.5]{derksen_makam:2} conjectured that there exists an upper bound on $d_{\F}(n,m)$ that is polynomial in $n$ and $m$. Combining Theorem~\ref{thm:main} and Theorem~\ref{thm:derksen_makam} we obtain the following affirmative answer to this conjecture: 

\begin{corollary}\label{cor:nilbound} 
We have the inequality 
\[d_{\F}(n,m)\le (m+2)n^4.\] 
\end{corollary}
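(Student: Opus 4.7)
The proof plan is entirely transparent: Corollary~\ref{cor:nilbound} is just the concatenation of the two inequalities stated immediately before it. First I would invoke Theorem~\ref{thm:main}, which gives
\[ d_{\F}(n,m) \le \beta_{\F}(n, m+1). \]
Next I would apply Theorem~\ref{thm:derksen_makam} with the number of generators taken to be $m+1$ rather than $m$, yielding
\[ \beta_{\F}(n, m+1) \le \bigl((m+1)+1\bigr) n^4 = (m+2)\, n^4. \]
Chaining these two inequalities produces exactly $d_{\F}(n,m) \le (m+2)n^4$, which is the claim.

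There is essentially no obstacle in this deduction; all of the content is already packaged inside the two cited results. The proof of Theorem~\ref{thm:main} is where the real work lives (it will translate, via Zubkov's theorem, information about homogeneous generators of $R_{n,m+1}$ into information about the monomials of $\free_m$ lying in $I_{n,m}$), and the Derksen--Makam bound is taken as a black box. The one small thing worth explicitly noting in writing up the corollary is that Theorem~\ref{thm:derksen_makam} is uniform in the number of generators, so the substitution $m \mapsto m+1$ is legitimate; this is immediate from the stated form of that theorem. Accordingly, I would present the corollary's proof as a two-line display combining the two bounds, with a short sentence justifying the substitution, and nothing else.
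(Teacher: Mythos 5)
Your proposal is correct and is exactly the paper's argument: the corollary is obtained by chaining Theorem~\ref{thm:main} with Theorem~\ref{thm:derksen_makam} applied to $m+1$ generic matrices, giving $d_{\F}(n,m)\le \beta_{\F}(n,m+1)\le (m+2)n^4$. Nothing further is needed.
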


\begin{remark}\label{remark:earlierbounds} Corollary~\ref{cor:nilbound} is a drastic improvement of the earlier known general upper bounds on $d_{\F}(n,m)$: 
\begin{enumerate}
\item  $d_{\F}(n,m)\le n^6m^{n+1}$ by Belov \cite{belov}.  
\item $d_{\F}(n,m)\le \frac 16 n^6m^{n}$ by Klein \cite{klein}. 
\item  $d_{\F}(n,m)\le 2^{18}mn^{12\log_3(n)+28}$ by Belov and Kharitonov \cite{belov_kharitonov}. 
\end{enumerate}
It is easy to see that $d_{\F}(2,m)\le m+1$. 
We note that for the case $n=3$ exact results on $d_{\F}(3,m)$ were obtained by Lopatin \cite{lopatin:commalg}. Moreover, Lopatin \cite{lopatin:jalg} proved that if $\mathrm{char}(\F)>\frac n2$ then $d_{\F}(n,m)\le n^{1+\log_2(3m+2)}$ and $d_{\F}(n,m)\le 2^{2+\frac n2}m$.  
\end{remark} 

\begin{remark} \label{remark:largechar}
When $\mathrm{char}(\F)>n^2+1$, we have $\beta_{\F}(n,m)\le n^2$.  
Indeed, the proof presented by Formanek \cite{formanek:86} (following the original arguments of  Razmyslov \cite{razmyslov} and Procesi \cite{procesi}) for the zero characteristic case of the corresponding inequality goes through without essential changes when $\mathrm{chat}(\F)>n^2+1$. 
Thus by Theorem~\ref{thm:main} we get that $d_{\F}(n,m)\le n^2$ when $\mathrm{char}(\F)>n^2+1$. 
\end{remark}

In Section~\ref{sec:lower} we  show that the following lower bound for $d_{\F}(n,m)$ due to   E. N. Kuzmin \cite{kuzmin} when $\mathrm{char}(\F)=0$ or 
$\mathrm{char}(\F)>n$ holds in arbitrary characteristic: 

\begin{theorem}\label{thm:kuzmin} 
The monomial $x_2x_1x_2x_1^2x_2x_1^3\cdots x_2x_1^{n-1}$ is not contained in the ideal $I_{n,2}$. 
In particular, for $m\ge 2$ we have $d_{\F}(n,m)\ge n(n+1)/2$. 
\end{theorem}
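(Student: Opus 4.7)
The numerical bound follows at once from the non-membership assertion: the monomial $M = x_2 x_1 x_2 x_1^2 \cdots x_2 x_1^{n-1}$ has length $(n-1) + (1 + 2 + \cdots + (n-1)) = (n-1)(n+2)/2 = n(n+1)/2 - 1$, so if $M \notin I_{n,2}$ then by definition $d_{\F}(n,2) \ge n(n+1)/2$; the inclusion $\free_2 \hookrightarrow \free_m$ sends $I_{n,2}$ into $I_{n,m} \cap \free_2$, giving the same bound for every $m \ge 2$. Thus the substance of the theorem is the statement $M \notin I_{n,2}$.

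My plan is to exhibit an $\F$-algebra homomorphism $\rho: \free_2 \to B$, for some explicitly constructed finite-dimensional $\F$-algebra $B$, such that $\rho(a)^n = 0$ for every $a \in \free_2^+$ (equivalently, every element of the $\F$-subalgebra $\rho(\free_2^+)$ of $B$ has $n$-th power zero) while $\rho(M) \neq 0$. Such $\rho$ factors through $\free_2 / I_{n,2}$ and immediately certifies $M \notin I_{n,2}$.

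The natural candidate for $B$ is a subalgebra of strictly upper-triangular $N \times N$ matrices over $\F$ with $N = n(n+1)/2$. Let $V = \F^N$ have basis $\{e_{(i,j)} : 1 \le i \le j \le n\}$ arranged as a staircase with columns indexed by $j \in \{1, \ldots, n\}$, the $j$-th column consisting of the $j$ basis vectors $e_{(1,j)}, e_{(2,j)}, \ldots, e_{(j,j)}$. Define $b_1 = \rho(x_1)$ to shift within each column ($b_1 e_{(i,j)} = e_{(i+1,j)}$ for $i < j$, zero otherwise, so that $b_1^n = 0$) and $b_2 = \rho(x_2)$ to jump from the bottom of each column to the top of the previous one ($b_2 e_{(j,j)} = e_{(1,j-1)}$ for $j \ge 2$, zero otherwise, so that $b_2^2 = 0$). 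Applying $M(b_1, b_2)$ to $e_{(1,n)}$ and processing the blocks $x_2 x_1^k$ from right to left, one successively moves $e_{(1,n)} \mapsto e_{(n,n)} \mapsto e_{(1,n-1)} \mapsto e_{(n-1,n-1)} \mapsto \cdots \mapsto e_{(1,1)}$, which confirms $\rho(M) \neq 0$.

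The main obstacle is verifying the identity $b^n = 0$ for \emph{every} element of the subalgebra generated by $b_1$ and $b_2$, not just the generators. The bare staircase construction above fails this test: the combination $b_1 + b_2$ carries $e_{(1,n)}$ through a Jordan chain of length $N$, so $(b_1 + b_2)^n \neq 0$ whenever $N > n$. To succeed, the construction must be modified so that identities such as $(b_1 + \beta b_2)^n = 0$ hold identically in $\beta \in \F$, and more generally for all elements of the generated algebra. Plausible remedies include adding correction entries that enforce the necessary cancellations in the off-diagonal blocks, tensoring with an auxiliary algebra such as a truncated polynomial ring $\F[t]/(t^K)$ or a Grassmann algebra to dampen high-order contributions, or replacing the single staircase by a cleverly signed doubling so that long chains annihilate in $n$-th powers. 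Because the theorem is asserted in arbitrary characteristic, I cannot appeal to the complete linearization $\frac{1}{n!}\sum_{\sigma \in S_n} a_{\sigma(1)} \cdots a_{\sigma(n)}$ of $x^n$, which is the customary device when $\mathrm{char}(\F) = 0$ or $\mathrm{char}(\F) > n$; a genuinely combinatorial or matrix-theoretic verification of $x^n = 0$ on the constructed algebra will be the technical heart of the argument, while everything else — in particular the action of $M$ producing the nonzero vector $e_{(1,1)}$ — is routine.
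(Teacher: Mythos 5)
Your reduction of the numerical bound to the non-membership statement is essentially fine (though note a small slip: to pass from $m=2$ to general $m\ge 2$ you need the retraction $\free_m\to\free_2$, $x_i\mapsto 0$ for $i\ge 3$, which maps $I_{n,m}$ into $I_{n,2}$ and fixes the monomial; the inclusion you cite, $I_{n,2}\subseteq I_{n,m}\cap\free_2$, goes the wrong way and by itself proves nothing). The real problem is that the substance of the theorem — a certificate that $x_2x_1x_2x_1^2\cdots x_2x_1^{n-1}\notin I_{n,2}$ — is never produced. Your strategy requires an explicit algebra $B$ in which \emph{every} element of the subalgebra generated by $\rho(x_1),\rho(x_2)$ has $n$-th power zero while $\rho(M)\neq 0$; you construct the staircase pair $b_1,b_2$, correctly observe that it fails this requirement (already $(b_1+b_2)^n\neq 0$), and then list only speculative remedies ("correction entries", tensoring with a truncated polynomial or Grassmann algebra, "signed doubling") without carrying any of them out or verifying the identity $x^n=0$ on the resulting algebra. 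Since such a $B$ with $\rho(M)\neq 0$ is, via $\free_2/I_{n,2}$, essentially equivalent to the statement being proved, deferring its construction leaves the proof with no content at the decisive point. The difficulty is genuine: in characteristic $p\le n$ one cannot control the $n$-th powers of general linear combinations through the multilinearization of $x^n$, which is exactly why the naive staircase cannot be repaired by a routine argument.

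For comparison, the paper does not construct any algebra satisfying $x^n=0$ at all. It works in the free $\Z$-algebra $\zfree=\Z\langle x,y\rangle^+$ and introduces a $\Z$-submodule $\bfree$ spanned by the words $[a_1,\dots,a_k]=x^{a_1}yx^{a_2}y\cdots yx^{a_k}$ in which some exponent is $\ge n$ or two exponents coincide, together with all transposition-symmetrized sums $[a_1,\dots,a_k]+[a_1,\dots,a_k]^{(ij)}$ (this $\bfree$ is not an ideal, only a submodule with partial one-sided multiplicative stability). A chain of combinatorial lemmas — an induction on the number of $y$-blocks, stability of $\bfree$ under the derivations $\delta_a(x)=x^a$, $\delta_a(y)=0$, and the fact that $\zfree/\bfree$ is a torsion-free $\Z$-module (which substitutes for dividing by factorials and is what makes the argument characteristic free) — shows that $w_0(c_1w_1+\cdots+c_mw_m)^nw_{m+1}$ always lies in $\F\otimes_\Z\bfree$, i.e.\ $I_{n,2}\subseteq\F\otimes_\Z\bfree$, while the words with strictly increasing exponents $0\le a_1<\cdots<a_k\le n-1$, of which your monomial $M=[0,1,\dots,n-1]$ is one, remain linearly independent modulo $\bfree$. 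If you want to salvage your finite-dimensional approach, you would have to supply a complete construction and a full verification of the identity $x^n=0$ in arbitrary characteristic; as it stands, the proposal identifies the technical heart of the problem but does not solve it.
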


\begin{remark}\label{remark} It is well known that when $0<\mathrm{char}(\F)\le n$, the 
element $x_1x_2\cdots x_m$ is not contained in $I_{n,m}$, see for example 
\cite[5. Remarks. (I)]{nagata}. So in this case  for $m\ge 2$  we have 
\[\max\{m+1,n(n+1)/2\}\le d_{\F}(n,m)\le (m+2)n^4.\]  
\end{remark}

%%%%%%%%%%%%%%%%%%%%%%%%%%

\section{Identities of matrices with forms}\label{sec:identities} 

The map $x_i\mapsto X_i$ $(i=1,\dots,m)$ extends to a unique $\F$-algebra homomorphism $\varphi_1:\free_m\to A^{n\times n}$. We have $\varphi_1(1)=I$, the $n\times n$ identity matrix. 
Consider the commutative polynomial algebra 
\[\mathcal{P}_{n,m}=\F[s_l(a)\mid a\in \free_m^+,\ l=1,\dots,n]\] 
generated by the infinitely many commuting indeterminates $s_l(a)$. 
Define  the $\F$-algebra homomorphism 
\[\varphi_2:\mathcal{P}_{n,m}\to R_{n,m}, \quad \varphi_2(s_l(a))=\sigma_l(\varphi_1(a))\] 
where for $B\in A^{n\times n}$ we have 
\[\det(tI+B)=\sum_{l=0}^nt^l\sigma_{n-l}(B),\] 
so $\sigma_l(B)$ is the sum of the principal $l\times l$ minors of $B$. 
A theorem of Donkin \cite{donkin} asserts that $\varphi_2$ is surjective onto $R_{n,m}$.  
Combining $\varphi_1$ and $\varphi_2$ we get an $\F$-algebra homomorphism 
\[\varphi:\mathcal{P}_{n,m}\otimes \free_m\to A^{n\times n}, \quad 
b\otimes a\mapsto \varphi_2(b)\varphi_1(a).\] 
The subalgebra $\conc_{n,m}=\varphi(\mathcal{P}_{n,m}\otimes \free_m)$ is called the {\it algebra of matrix concomitants}. It can be interpreted as the algebra of $GL_n(\F)$-equivariant polynomial maps $(\F^{n\times n})^m\to \F^{n\times n}$, where $GL_n(\F)$ acts on $\F^{n\times n}$  by conjugation and on the space $(\F^{n\times n})^m$ of $m$-tuples of matrices by simultaneous conjugation. 
For $a\in \free_m^+$ define an element $\chi_n(a)$ in $\mathcal{P}_{n,m}\otimes \free_m$ as follows: 
\[\chi_n(a)=\sum_{l=0}^n(-1)^{l}s_l(a)\otimes a^{n-l}\] 
(where $s_0(a)=1$). 
We need the following result of Zubkov \cite{zubkov} (see also Lopatin \cite[Theorem 2.4]{lopatin:jpaa}): 

\begin{theorem}\label{thm:zubkov} (Zubkov \cite{zubkov})  
The ideal $\ker(\varphi)$ is generated by 
\[\{b\otimes 1,\ \chi_n(a)\mid b\in\ker(\varphi_2),\ a\in \free_m^+\}.\]
\end{theorem}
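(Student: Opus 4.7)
\emph{Proof proposal.} The plan is to split the argument into the easy inclusion $J\subseteq\ker(\varphi)$, where $J$ denotes the two-sided ideal generated by the listed elements, and the hard converse. For the easy direction, $\varphi(b\otimes 1)=\varphi_2(b)I=0$ whenever $b\in\ker(\varphi_2)$, and
\[\varphi(\chi_n(a))=\sum_{l=0}^n(-1)^l\sigma_l(\varphi_1(a))\,\varphi_1(a)^{n-l}=0\]
by the Cayley--Hamilton theorem applied to the matrix $\varphi_1(a)\in A^{n\times n}$.

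For the reverse inclusion I would first factor out the subideal $\langle b\otimes 1:b\in\ker(\varphi_2)\rangle$. Because $\varphi_2:\mathcal{P}_{n,m}\to R_{n,m}$ is surjective by Donkin's theorem, this produces an induced surjection $\bar\varphi:R_{n,m}\otimes\free_m\to\conc_{n,m}$, and the task becomes to show that $\ker(\bar\varphi)$ is generated by the images $\bar\chi_n(a)$ of the elements $\chi_n(a)$. Both sides carry a compatible $\N^m$-grading coming from the multidegree in the generic matrices $X_1,\dots,X_m$, and each multihomogeneous component is a polynomial $GL_n$-representation; the claim then becomes a multigraded dimension equality component by component.

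The central step is a polarization and reduction procedure. Substituting $a\mapsto t_1a_1+\cdots+t_ka_k$ into $\chi_n(a)$ and extracting the multilinear coefficient in the formal parameters $t_j$ produces fully polarized forms of the Cayley--Hamilton identity that lie in $J$. When the $a_j$ are chosen among the generators $x_i$, these expressions rewrite symmetric combinations of monomials of length $n$ in the $x_i$ as $R_{n,m}$-linear combinations of shorter monomials, and multiplying left and right by elements of $\free_m$ yields analogous reductions inside any longer monomial. Iterating produces a spanning set for $(R_{n,m}\otimes\free_m)/\langle\bar\chi_n(a)\rangle$ indexed by words that avoid a prescribed list of reducible patterns.

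The hard part will be to verify that this spanning set is actually linearly independent in $\conc_{n,m}$, so that the reduction modulo $\chi_n$ has not collapsed too much. In characteristic zero Razmyslov and Procesi achieve this by Schur--Weyl duality and the Reynolds operator, both of which are unavailable here. Following Zubkov's strategy, I would replace these tools by Donkin's characteristic-free description of $R_{n,m}$ via bideterminants and good filtrations of the $GL_n$-modules $A$ and $\conc_{n,m}$, using a straightening law together with the polarized Cayley--Hamilton identity to compute the multigraded Hilbert series of $(R_{n,m}\otimes\free_m)/\langle\bar\chi_n(a)\rangle$ and of $\conc_{n,m}$ and to verify that they coincide in every multidegree. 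This characteristic-free Hilbert series matching is the crux and the main obstacle of the proof.
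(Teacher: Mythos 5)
There is an important mismatch of expectations here: the paper does not prove Theorem~\ref{thm:zubkov} at all. It is quoted as a known result of Zubkov \cite{zubkov} (with Lopatin \cite{lopatin:jpaa} and the book of De Concini and Procesi \cite{deconcini_procesi} given as alternative sources), and the remark following it only addresses the harmless translation between the polynomial algebra $\mathcal{P}_{n,m}$ used here and the ones used in those references. So there is no in-paper argument for you to have reconstructed; the only honest options were either to cite the literature, or to actually prove Zubkov's theorem, which is a substantial piece of work in its own right.

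Measured against the second option, your proposal has a genuine gap: it establishes only the easy inclusion $J\subseteq\ker(\varphi)$ (correctly, via Cayley--Hamilton and surjectivity of $\varphi_2$), and then describes the converse as a plan rather than a proof. The reduction to showing that $\ker(\bar\varphi)$ is generated by the $\bar\chi_n(a)$ in $R_{n,m}\otimes\free_m$ is fine, and the idea of producing rewriting rules from polarized Cayley--Hamilton identities is the right general direction, but everything that makes the theorem hard is deferred: you never exhibit the spanning set, never prove its linear independence in $\conc_{n,m}$, and never carry out the ``Hilbert series matching in every multidegree'' that you yourself identify as the crux. Moreover, in positive characteristic the polarization step itself is delicate: extracting multilinear components of $\chi_n(t_1a_1+\cdots+t_ka_k)$ does give elements of $J$, but recovering enough relations from them is obstructed by the missing factorials (one cannot restitute by dividing by $\lambda_1!\cdots\lambda_k!$), which is precisely why Zubkov's proof needs the full machinery of good filtrations and modules for the general linear group scheme rather than a direct combinatorial rewriting argument. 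As written, the proposal proves one containment and gestures at the other; the substance of the theorem remains unproven.
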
 

\begin{remark}
The papers \cite{zubkov} and \cite{lopatin:jpaa} use  different  commutative polynomial algebras than our $\mathcal{P}_{n,m}$, however, it is straightforward that Theorem~\ref{thm:zubkov} is an immediate consequence of the versions stated in \cite{zubkov}, \cite{lopatin:jpaa}.  We note that  \cite{zubkov}, \cite{lopatin:jpaa} give 
descriptions of the ideal $\ker(\varphi_2)$ as well. 
A self-contained approach to the theorem of Zubkov can be found in the recent book by 
De Concini and Procesi \cite{deconcini_procesi}. 
\end{remark} 

Denote by $\eta:\conc_{n,m}\to \conc_{n,m}/R_{n,m}^+\conc_{n,m}$ the natural surjection 
(ring homomorphism), where $R_{n,m}^+$ is the sum of the positive degree homogeneous components of $R_{n,m}$. 

\begin{corollary}\label{cor:1} 
The kernel of $\eta\circ \varphi_1$ is the ideal 
$I_{n,m}=(a^n\mid a\in \free_m^+)$ in $\free_m$. 
\end{corollary}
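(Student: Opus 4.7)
The strategy is to derive the corollary from Zubkov's theorem (Theorem~\ref{thm:zubkov}) by killing the commutative factor $\mathcal{P}_{n,m}$ via an augmentation. Let $\iota\colon \free_m\hookrightarrow \mathcal{P}_{n,m}\otimes \free_m$ denote the inclusion $a\mapsto 1\otimes a$, and let $\epsilon\colon \mathcal{P}_{n,m}\to \F$ be the augmentation sending each generator $s_l(a)$ with $l\ge 1$ to $0$; write $\mathcal{P}_{n,m}^+:=\ker(\epsilon)$. Then $\varphi_1=\varphi\circ \iota$, and $\epsilon\otimes \mathrm{id}_{\free_m}$ is a surjective ring homomorphism with kernel $\mathcal{P}_{n,m}^+\otimes \free_m$ that splits $\iota$.

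The first step is to identify $\ker(\eta\circ\varphi)$. By Donkin's theorem $\varphi_2(\mathcal{P}_{n,m})=R_{n,m}$, hence $\conc_{n,m}=R_{n,m}\cdot\varphi_1(\free_m)$, and therefore $R_{n,m}^+\conc_{n,m}=\varphi(\mathcal{P}_{n,m}^+\otimes\free_m)$. It follows that
\[\ker(\eta\circ \varphi)=\varphi^{-1}(R_{n,m}^+\conc_{n,m})=\ker(\varphi)+\mathcal{P}_{n,m}^+\otimes\free_m.\]
Since $\epsilon\otimes \mathrm{id}$ kills the second summand while splitting $\iota$, one obtains that $1\otimes f$ lies in $\ker(\eta\circ\varphi)$ if and only if $f\in (\epsilon\otimes\mathrm{id})(\ker(\varphi))$, so
\[\ker(\eta\circ\varphi_1)=(\epsilon\otimes\mathrm{id})(\ker(\varphi)).\]

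Now I would invoke Zubkov's theorem: $\ker(\varphi)$ is the two-sided ideal generated by $\{b\otimes 1\mid b\in \ker(\varphi_2)\}\cup\{\chi_n(a)\mid a\in\free_m^+\}$. Because $\epsilon\otimes\mathrm{id}$ is surjective, the image of this ideal is the two-sided ideal of $\free_m$ generated by the images of these generators. For $b\in\ker(\varphi_2)$ one has $b\in\mathcal{P}_{n,m}^+$ (as $\varphi_2$ is an isomorphism in degree $0$), hence $(\epsilon\otimes\mathrm{id})(b\otimes 1)=\epsilon(b)=0$. For $a\in\free_m^+$, since $s_0(a)=1$ while $\epsilon(s_l(a))=0$ for $l\ge 1$, only the $l=0$ summand in the definition of $\chi_n(a)$ survives, giving $(\epsilon\otimes\mathrm{id})(\chi_n(a))=a^n$. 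Therefore $\ker(\eta\circ\varphi_1)$ equals the ideal generated by $\{a^n\mid a\in\free_m^+\}$, which is exactly $I_{n,m}$.

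The argument is essentially bookkeeping once Theorem~\ref{thm:zubkov} is granted. The only subtle point is the identification $R_{n,m}^+\conc_{n,m}=\varphi(\mathcal{P}_{n,m}^+\otimes\free_m)$, which rests on the surjectivity of $\varphi_2$ provided by Donkin's theorem; after that, everything reduces to a clean computation of the images of the two families of generators of $\ker(\varphi)$ under the augmentation.
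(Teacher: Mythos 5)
Your proof is correct and follows essentially the same route as the paper: Donkin's theorem to identify $\ker(\eta\circ\varphi)=\ker(\varphi)+\mathcal{P}_{n,m}^+\otimes\free_m$ (note your $\mathcal{P}_{n,m}^+\otimes\free_m$ is exactly the paper's ideal $(s_l(a)\otimes 1)$), then Zubkov's theorem to reduce everything to the generators $b\otimes 1$ and $\chi_n(a)$, whose only surviving contribution modulo the $s_l(a)$'s is $a^n$. The sole cosmetic difference is that you extract the conclusion by pushing forward along the retraction $\epsilon\otimes\mathrm{id}$ instead of intersecting the rewritten ideal with $1\otimes\free_m$, which just makes explicit the step the paper labels as obvious.
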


\begin{proof} We have $\ker(\eta\circ\varphi_1)=\ker(\eta\circ\varphi)\cap \free_m$ 
(where we identify $\free_m$ with the subalgebra $1\otimes \free_m$ in $\mathcal{P}_{n,m}\otimes \free_m$). 
The ideal $(s_l(a)\otimes 1 \mid a\in \free_m^+, \ 1\le l\le n)$ is mapped surjectively onto 
$R_{n,m}^+\conc_{n,m}$ by \cite{donkin}. Therefore we have 
\begin{align*}\ker(\eta\circ\varphi)=\varphi^{-1}(R_{n,m}^+\conc_{n,m})
 =\ker(\varphi)+(s_l(a)\otimes 1 \mid a\in \free_m^+, \ 1\le l\le n)
\\=(s_l(a)\otimes 1, 1\otimes  a^n\mid a\in\free_m^+, \ 1\le l\le n)  
\end{align*}
(the last equality follows from Theorem~\ref{thm:zubkov} and the fact that $1\otimes a^n-\chi_n(a)$ belongs to  $(s_l(a)\otimes 1 \mid a\in \free_m^+, \ 1\le l\le n))$. 
Obviously the ideal $(s_l(a)\otimes 1, 1\otimes a^n\mid a\in\free_m^+, \ 1\le l\le n)  $ intersects $\free_m$ in $I_{n,m}$. 
\end{proof} 

\begin{remark} Corollary~\ref{cor:1} implies that the relatively free algebra 
$\free_m/I_{n,m}$ is isomorphic to $\conc_{n,m}/R_{n,m}^+\conc_{n,m}$. 
When $\mathrm{char}(\F)=0$, this statement is due to Procesi \cite[Corollary 4.7]{procesi}. 
\end{remark}

The algebras $R_{n,m}$ and $\conc_{n,m}$ are $\Z^m$-graded: 
\[\deg_m(X_{i_1}\cdots X_{i_d})=(\alpha_1,\dots,\alpha_m) \text{ where }
\alpha_k=|\{j\mid i_j=k\}|\] 
and 
\[\deg_m(\sigma_l(X_{i_1}\cdots X_{i_d})) =l\cdot \deg_m(X_{i_1}\cdots X_{i_d}).\] 

\bigskip 

\begin{proofof}{Theorem~\ref{thm:main}} Set $d=\beta_{\F}(n,m+1)$. We have to show that 
$x_{i_1}\cdots x_{i_d}\in I_{n,m}$ for all $i_1,\dots,i_d\in\{1,\dots,m\}$. 
Recall that by \cite{donkin} the algebra $R_{n,m+1}$ is generated by the elements $\sigma_l(W)$, where $W$ is a word in $X_1,\dots,X_{m+1}$, and $l\in\{1,\dots,n\}$. 
The total degree of the element  $\tr(X_{i_1}\cdots X_{i_d}X_{m+1})\in R_{n,m+1}$ is strictly greater than $\beta_{\F}(n,m+1)$, whence we have a relation 
\begin{equation}\label{eq:longtrace}
\tr(X_{i_1}\cdots X_{i_d}X_{m+1})=\sum_{\lambda\in\Lambda} a_{\lambda} f_{\lambda}
\end{equation}
where $\Lambda$ is a finite index set, $a_{\lambda}\in\F$, and  each 
$f_{\lambda}\in R_{n,m+1}$ is a product $f_{\lambda}=\sigma_{l_1}(W_1)\cdots \sigma_{l_r}(W_r)$ with $r\ge 2$ and $W_1,\dots,W_r$ non-empty words in $X_1,\dots,X_{m+1}$. 
The $\Z^{m+1}$-multidegree of $\tr(X_{i_1}\cdots X_{i_d}X_{m+1})$ is 
\[\deg_{m+1}(\tr(X_{i_1}\cdots X_{i_d}X_{m+1}))=(\deg_m(\tr(X_{i_1}\cdots X_{i_d})),1).\] 
The terms $f_{\lambda}$ are all $\Z^{m+1}$-homogeneous, whence we may assume that each has the above $\Z^{m+1}$-degree (since the other possible terms on the right hand side of \eqref{eq:longtrace} must cancel each other). 
It follows that for each $f_{\lambda}$ exactly one of its factors 
$\sigma_{l_1}(W_1), \dots, \sigma_{l_r}(W_r)$ has $\Z^{m+1}$-degree of the form 
$(\alpha_1,\dots,\alpha_m,1)$, say this is $\sigma_{l_1}(W_1)$, and the remaining factors have $\Z^{m+1}$-degree 
of the form $(\gamma_1,\dots,\gamma_m,0)$. 
Necessarily we have $l_1=1$ and so $\sigma_{l_1}(W_1)=\tr(X_{m+1}Z)$ for some (possibly empty) word $Z$ in 
$X_1,\dots,X_m$, and $W_2,\dots,W_r$ are non-empty words in $X_1,\dots,X_m$. Set 
\[g_{\lambda}=\sigma_{l_2}(W_2)\cdots \sigma_{l_r}(W_r)Z\in\conc_{n,m}, \]
and note that $f_{\lambda}=\tr(g_{\lambda}X_{m+1})$.  
Using linearity of $\tr(-)$ relation \eqref{eq:longtrace} can be written as 
\begin{equation}\label{eq:2longtrace}
\tr(X_{m+1}(X_{i_1}\cdots X_{i_d}-\sum_{\lambda\in\Lambda} a_{\lambda} g_{\lambda}))=0\in R_{n,m+1}.\end{equation} 
Substituting $X_{m+1}\mapsto E_{ij}$ (the matrix whose $(i,j)$-entry is $1$ and all other entries are $0$) we get from \eqref{eq:2longtrace} that the $(j,i)$-entry of 
$X_{i_1}\cdots X_{i_d}-\sum_{\lambda\in \Lambda} a_{\lambda}g_{\lambda}$ is $0$. 
This holds for all $(i,j)$, thus we have the equality 
\begin{equation}\label{eq:3longtrace}X_{i_1}\cdots X_{i_d}=\sum_{\lambda}a_{\lambda}g_{\lambda}.
\end{equation}  
The right hand side of \eqref{eq:3longtrace} is obviously contained in 
$R_{n,m}^+\conc_{n,m}$, therefore 
it follows from \eqref{eq:3longtrace} that the element $x_{i_1}\cdots x_{i_d}\in\free_m$ belongs to the kernel of $\eta\circ \varphi_1$.  Thus by Corollary~\ref{cor:1} we conclude 
that $x_{i_1}\cdots x_{i_d}\in I_{n,m}$.  
\end{proofof}  

%%%%%%%%%%%%%%%%%%%%%%%%%%

\section{Lower bound}\label{sec:lower} 

Kuzmin's proof of the case $\mathrm{char}(\F)=0$ or 
$\mathrm{char}(\F)>n$ of Theorem~\ref{thm:kuzmin} 
(it is presented also in the survey of Drensky in \cite{drensky-formanek}) 
uses crucially Lemma~\ref{lemma:linearization} below, relating 
the {\it complete linearization of} $x^n$, namely 
\[P_n(x_1,\dots,x_n)=\sum_{\pi\in \mathrm{Sym}\{1,\dots,n\}}x_{\pi(1)}x_{\pi(2)}\cdots  
x_{\pi(n)}\in \free_n.\]

\begin{lemma}\label{lemma:linearization} 
If $\mathrm{char}(\F)=0$ or $\mathrm{char}(\F)>n$, 
then $I_{n,m}$ is spanned as an $\F$-vector space by the elements 
$P_n(w_1,\dots,w_n)$, where $w_1,\dots,w_n$ range over all non-empty monomials in $x_1,\dots,x_m$. 
\end{lemma}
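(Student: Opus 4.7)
The plan is to prove the equality $J = I_{n,m}$, where $J$ denotes the $\F$-subspace of $\free_m$ spanned by the elements $P_n(w_1,\dots,w_n)$ with $w_1,\dots,w_n$ ranging over non-empty monomials in $x_1,\dots,x_m$.

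For the easy direction $J \subseteq I_{n,m}$, which is valid in arbitrary characteristic, I would invoke the classical polarization identity
\[P_n(w_1,\dots,w_n)=\sum_{\emptyset\neq S\subseteq \{1,\dots,n\}}(-1)^{n-|S|}\Bigl(\sum_{i\in S}w_i\Bigr)^n,\]
obtained by extracting the multilinear component of $(w_1+\cdots+w_n)^n$ via inclusion--exclusion. Since each $\sum_{i\in S}w_i$ is an element of $\free_m^+$, its $n$-th power lies in $I_{n,m}$, so the same is true of $P_n(w_1,\dots,w_n)$.

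For the reverse inclusion $I_{n,m}\subseteq J$, the characteristic hypothesis enters at the first step: the observation that $a^n\in J$ for every $a\in \free_m^+$. Indeed, $P_n(a,a,\dots,a)=n!\,a^n$, and since $n!$ is invertible in $\F$ when $\mathrm{char}(\F)=0$ or $\mathrm{char}(\F)>n$, we obtain $a^n=(n!)^{-1}P_n(a,\dots,a)$; writing $a$ as an $\F$-linear combination of non-empty monomials and exploiting the multilinearity of $P_n$ in each argument displays $a^n$ as an $\F$-linear combination of elements of the desired form.

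The principal obstacle is the next step: upgrading the containment $\{a^n:a\in\free_m^+\}\subseteq J$ to the containment $I_{n,m}\subseteq J$, which amounts to showing that $J$ is a two-sided ideal of $\free_m$. By iteration it is enough to prove $x_i\cdot J\subseteq J$ and $J\cdot x_i\subseteq J$ for each generator $x_i$. My approach is to organize the argument by the $\Z^m$-grading of $\free_m$ (which both $J$ and $I_{n,m}$ respect, as they are generated by homogeneous elements): in each multidegree $\alpha$ the relevant components are finite-dimensional, the list of $P_n$'s contributing to $J_{\alpha}$ is explicit, and one must verify that this finite list spans the corresponding component of $I_{n,m}$, which is in turn spanned by finitely many ``admissible'' products $u\cdot a^n\cdot v$ of controlled degrees. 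Concretely, for targets of the form $u\cdot a^n\cdot v$ with $a\in\free_m^+$ and $u,v$ monomials, one expresses them in terms of elements such as $P_n(ua,a,\dots,a,av)$, which upon expansion produces $ua^nv$ together with correction monomials of the same $\Z^m$-degree; the correction terms are then absorbed by taking linear combinations with further $P_n$'s of modified monomial arguments (and exploiting the symmetry and multilinearity of $P_n$, together with the fact $a^n\in J$ established above). The combinatorial/linear-algebraic bookkeeping in each multidegree is the technical heart of the proof, but it is not conceptually deep.
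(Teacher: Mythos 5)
Your two preliminary steps are correct (and standard): the inclusion of the span $J$ in $I_{n,m}$ via the inclusion--exclusion identity, and the observation that $a^n=(n!)^{-1}P_n(a,\dots,a)\in J$ for every $a\in\free_m^+$ when $\mathrm{char}(\F)=0$ or $\mathrm{char}(\F)>n$. But the proposal stops exactly where the lemma begins: you never actually prove that $J$ is a two-sided ideal, i.e.\ that $u\,a^n\,v\in J$ for monomials $u,v$. The sentence about expanding $P_n(ua,a,\dots,a,av)$ and having the correction terms ``absorbed by taking linear combinations with further $P_n$'s'' is an assertion, not an argument: besides a multiple of $ua^nv$, that expansion contains words in which $u$ and $v$ sit in interior positions (such as $a\,u\,a^{n-1}v$), and these are neither of the form $u'a^nv'$ nor visibly any closer to $J$ than what you started with, so without a specified induction parameter or an explicit identity the procedure has no reason to terminate. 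For $n=2$ the identity that does the job is, e.g., $2ua^2=P_2(ua,a)+P_2(u,a^2)-P_2(au,a)$; note it requires merging two copies of $a$ into the single argument $a^2$ to free a slot for $u$, a move your scheme never mentions, while the naive symmetric move $\sum_i P_n(\dots,uw_i,\dots)-\sum_i P_n(\dots,w_iu,\dots)$ only produces the commutator $uP_n(w_1,\dots,w_n)-P_n(w_1,\dots,w_n)u$. Likewise, organizing by the $\Z^m$-grading reduces nothing: there are infinitely many multidegrees, and ``one must verify that this finite list spans'' in each of them is a restatement of the claim. The missing uniform identity/induction (on the degree of the outer cofactors, as in Kuzmin's argument) is precisely the nontrivial content of the lemma, so calling it bookkeeping that ``is not conceptually deep'' leaves the heart of the proof missing.

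For comparison: the paper itself gives no proof of this lemma; it is quoted as background from Kuzmin \cite{kuzmin} (see also the presentation in \cite{drensky-formanek}), and the characteristic-free Section~\ref{sec:lower} is designed specifically to avoid it. There the ideal is spanned by elements $w_0(c_1w_1+\cdots+c_mw_m)^nw_{n+1}$ with the outer cofactors kept in place (proof of Proposition~\ref{prop:B}), and Lemma~\ref{lemma:w} and Corollary~\ref{cor:lambda} treat $w_0P_n(\dots)w_{n+1}$ and $w_0P_{\lambda}(\dots)w_{m+1}$ directly, never claiming that the span of the $P_n$'s alone is an ideal; the delicate steps are handled by an induction using the derivations $\delta_a$ and torsion-freeness of $\zfree/\bfree$ (Lemma~\ref{lemma:bproperties}). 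That design choice underscores that the ideal property you gloss over is exactly the hard step, and your write-up would need to supply the corresponding identity or induction to be a proof.
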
 

\begin{remark}\label{remark:characteristic}
The assumption on $\mathrm{char}(\F)$ in Lemma~\ref{lemma:linearization} is necessary, its statement obviously fails if $0<\mathrm{char}(\F)\le n$ 
(as it can be easily seen already in the special case $m=1$).  
Now we modify the arguments of Kuzmin to obtain Theorem~\ref{thm:kuzmin} in a characteristic free manner. It turns out that although Lemma~\ref{lemma:linearization} can not be applied, the main combinatorial ideas of Kuzmin's proof do work. 
\end{remark} 

Consider the free $\Z$-algebra $\zfree=\Z \langle x,y\rangle^+$ without unity. Write $\mfree$ for the set of non-empty monomials (words) in $x,y$. 
For a positive integer $k$ write $\zfree(k)$ for the $\Z$-submodule of $\zfree$ generated by the $w\in \mfree$  whose total degree in $y$ is $k-1$. 
It will be convenient to use the following notation: for $(a_1,\dots,a_k)\in \N_0^k$ set 
\[[a_1,\dots,a_k]=x^{a_1}yx^{a_2}y\cdots yx^{a_k}\in \mfree.\] 
The symmetric group $S_k=\mathrm{Sym}\{1,\dots,k\}$ acts on the right linearly on 
$\zfree(k)$, extending linearly the permutation action on $\zfree(k)\cap \mfree$ given by 
\[[a_1,\dots,a_k]^{\pi}=[a_{\pi(1)},\dots,a_{\pi(k)}] \quad \text{ for }\pi\in S_k.\]   
Let $\bfree$ denote the $\Z$-submodule of $\zfree$ generated by all the elements 
$[a_1,\dots,a_k]$ ($k\in \N$) such that $a_i\ge n$ for some $i\in\{1,\dots,k\}$ or $a_i=a_j$ for some 
$1\le i<j\le k$, and by all the elements of the form 
$[a_1,\dots,a_k]+[a_1,\dots,a_k]^{(ij)}$ where $(ij)$ denotes the transposition interchanging 
$i$ and $j$ for  $1\le i<j\le k$.  We shall use the following obvious properties of $\bfree$: 

\begin{lemma}\label{lemma:bproperties}
\begin{itemize}
\item[(i)] The $\Z$-submodule $\bfree\cap \zfree(k)$ of $\zfree(k)$ is  $S_k$-stable.  
\item[(ii)] We have the inclusions 
$y\bfree\subset \bfree$, $\zfree y\bfree\subset \bfree$, $\bfree y\subset \bfree$, 
and $\bfree y\zfree \subset \bfree$. 
\item[(iii)] Let $k$ be a positive integer, $u_1,\dots,u_{k-1}\in\mfree $ monomials such that 
$u_i\in y\zfree\cap \zfree y$ or $u_i=y$ for $i=1,\dots,k-1$. Then $\bfree$ contains the image of the $\Z$-module map on $\bfree\cap \zfree(k)$ given by 
\[[a_1,\dots,a_k]\mapsto x^{a_1}u_1x^{a_2}u_2 x^{a_3}\cdots u_{k-1}x^{a_k}.\] 
\item[(iv)] For any positive integer $a$, the $\Z$-submodule $\bfree$ of $\zfree$ is preserved by the derivation $\delta_a$ on $\zfree$ defined by $\delta_a(x)=x^a$, $\delta_a(y)=0$. 
\item[(v)] The factor $\zfree/\bfree$ is a free $\Z$-module freely generated by the images under the natural surjection $\zfree\to \zfree/\bfree$ of the monomials 
\[\widehat\mfree=\{[a_1,\dots,a_k]\mid k\in \mathbb{N},\quad 0\le a_1<a_2<\dots<a_k\le n-1\}.\] 
\end{itemize}
\end{lemma}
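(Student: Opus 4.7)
The plan is to verify each part by a direct check on the two families of defining generators of $\bfree$: the \emph{vanishing generators} $[a_1,\dots,a_k]$ with some $a_i\ge n$ or some $a_i=a_j$ for $i<j$, and the \emph{symmetrization generators} $[a_1,\dots,a_k]+[a_1,\dots,a_k]^{(ij)}$. Throughout I use the concatenation rule
\[[a_1,\dots,a_k]\cdot[b_1,\dots,b_l]=[a_1,\dots,a_{k-1},a_k+b_1,b_2,\dots,b_l].\]
For (i), a permutation $\pi\in S_k$ sends $[a_1,\dots,a_k]$ to $[a_{\pi(1)},\dots,a_{\pi(k)}]$, which preserves the presence of any large or repeated entry; it also conjugates the transposition $(ij)$ in a symmetrization generator into the transposition $(\pi^{-1}(i),\pi^{-1}(j))$, so $\bfree\cap\zfree(k)$ is $S_k$-stable. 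For (ii), multiplying a bracket on either side by $y$, $yw$, or $wy$ (for $w\in\zfree$) only prepends or appends entries to the bracket sequences and cannot destroy a large entry, a repetition, or a transposition pair in any defining generator.

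For (iii), write each $u_i$ in bracket form as $[0,e_{i,1},\dots,e_{i,q_i-1},0]$, with $u_i=y$ corresponding to $q_i=1$ and $[0,0]$. The image of $[a_1,\dots,a_k]$ under the substitution becomes a single bracket
\[[a_1,e_{1,1},\dots,e_{1,q_1-1},a_2,e_{2,1},\dots,e_{2,q_2-1},a_3,\dots,e_{k-1,q_{k-1}-1},a_k]\]
in which the $a_i$'s occupy prescribed positions $p_1<\dots<p_k$. A vanishing generator thus maps to a bracket with a large entry or a repetition among the $p_i$'s, and a symmetrization generator $[A]+[A]^{(ij)}$ maps to a sum of the form $[C]+[C]^{(p_i,p_j)}$; both images lie in $\bfree$. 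For (iv), the Leibniz rule together with $\delta_a(x^b)=bx^{a+b-1}$ yields
\[\delta_a([a_1,\dots,a_k])=\sum_{s=1}^k a_s[a_1,\dots,a_{s-1},a_s+a-1,a_{s+1},\dots,a_k].\]
If $a_j\ge n$ then every summand retains an entry $\ge n$; if instead $a_j=a_l$ with $j<l$ and no entry is $\ge n$, the summands with $s\notin\{j,l\}$ preserve the repetition, while the two summands with $s\in\{j,l\}$ combine (using $a_j=a_l$) into $a_j$ times a symmetrization generator with transposition $(j,l)$. On a symmetrization generator $[A]+[A]^{(ij)}$ a short reindexing (bijecting $s\mapsto(ij)(s)$ on the second summand) produces $\sum_{s=1}^k a_s([A+(a-1)e_s]+[A+(a-1)e_s]^{(ij)})$, again a sum of symmetrization generators.

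The substance of the lemma lies in part (v). For spanning, any bracket with an entry $\ge n$ or a repetition is already in $\bfree$, while any bracket with distinct entries in $\{0,\dots,n-1\}$ is congruent modulo $\bfree$ to $\pm$ its sorted representative, since the symmetrization relations give $[a_1,\dots,a_k]\equiv-[a_1,\dots,a_k]^{(ij)}\pmod{\bfree}$ and transpositions generate $S_k$. For linear independence, decompose the free $\Z$-module $\zfree(k)$ as $A_k\oplus B_k\oplus C_k$, where $A_k$, $B_k$, $C_k$ are spanned respectively by brackets with some entry $\ge n$, with all entries in $\{0,\dots,n-1\}$ but some repetition, and with all entries in $\{0,\dots,n-1\}$ and distinct. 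The key observation is that each defining generator of $\bfree\cap\zfree(k)$ lies entirely in one of these three summands, since the transposed bracket in a symmetrization generator has the same multiset of entries as the original; this yields $\bfree\cap\zfree(k)=A_k\oplus B_k\oplus(\bfree\cap C_k)$. On $C_k$ the $S_k$-action on the monomial basis is free, with orbits indexed by the $k$-element subsets of $\{0,\dots,n-1\}$, and the symmetrization relations impose the sign-alternating action, so $C_k/(\bfree\cap C_k)$ is free of rank $\binom{n}{k}$ with basis given by the length-$k$ elements of $\widehat\mfree$. Summing over $k$ delivers (v). I expect the main obstacle to be verifying this direct-sum decomposition of $\bfree\cap\zfree(k)$, rather than a mere inclusion.
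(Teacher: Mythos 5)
Your proof is correct, and on the one substantive part, (v), it takes a genuinely different route from the paper. For (i)--(iv) the paper simply declares the statements immediate from the construction of $\bfree$, and your generator-by-generator checks (the concatenation rule, the reindexing $s\mapsto (ij)(s)$ for $\delta_a$ applied to a symmetrization generator, and the pairing of the two summands $s\in\{j,l\}$ when $a_j=a_l$) are exactly the verifications being waved at, so there is no divergence there. For (v), the paper also begins with a homogeneity reduction, but it decomposes $\zfree$ and $\bfree$ along the finer multidegree components $\zfree(c_1,\dots,c_k)$ (your $A_k\oplus B_k\oplus C_k$ splitting, refined inside $C_k$ by the orbit decomposition you use anyway), reducing to the claim that $q[a_1,\dots,a_k]\notin\bfree$ for $q\neq 0$ and $0\le a_1<\dots<a_k\le n-1$; it then argues combinatorially: write $q[a_1,\dots,a_k]$ as a shortest possible signed sum $\sum_i\varepsilon_i(w_i+w_i^{\pi_i})$ of symmetrization generators and extract a telescoping subsum that vanishes, contradicting minimality. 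You instead identify each free $S_k$-orbit in the monomial basis of $C_k$ with $\Z[S_k]$ and observe that modulo the submodule generated by all $g+g\tau$ ($\tau$ a transposition) one gets the sign module; to make that last step airtight you should add the one line that does the work: the quotient is cyclic because $g\equiv\mathrm{sgn}(g)\,e$, and the sign character $\Z[S_k]\to\Z$ kills every $g+g\tau$ while sending the sorted word to $1$, so the quotient is free of rank one. That sentence is precisely what replaces the paper's telescoping argument and is what guarantees absence of torsion, which is needed later (in the proofs of Lemma~\ref{lemma:w} and Corollary~\ref{cor:lambda}). Your direct-sum identity $\bfree\cap\zfree(k)=A_k\oplus B_k\oplus(\bfree\cap C_k)$ is correctly justified by the observation that every defining generator is homogeneous for this splitting, since transposing preserves the multiset of exponents. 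In sum: your argument is more structural and a bit cleaner (the independence and torsion-freeness come for free from the sign character), while the paper's is more elementary and self-contained, using only a cancellation argument on words.
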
 
\begin{proof} Statements (i), (ii), (iii), (iv) are immediate consequences of the construction of $\bfree$. To prove (v) note that $\zfree=\bigoplus \zfree(c_1,\dots,c_k)$ where the direct sum is taken over 
$k\in \N$ and $0\le c_1\le \cdots \le c_k$, and 
$\zfree(c_1,\dots,c_k)$ stands for the $\Z$-submodule generated by $[c_1,\dots,c_k]^\pi$ 
as $\pi$ ranges over $S_k$. Moreover, 
$\bfree=\bigoplus \bfree(c_1,\dots,c_k)$ 
where  $\bfree(c_1,\dots,c_k)=\bfree\cap  \zfree(c_1,\dots,c_k)$. 
Now $\zfree(c_1,\dots,c_k)\subset \bfree$ if $c_i=c_j$ for some $i\neq j$ or if $c_i\ge n$ for some $i$. It is also clear that for $0\le a_1<\cdots<a_k$ we have 
$\zfree(a_1,\dots,a_k)=\Z \cdot [a_1,\dots,a_k]+\bfree(a_1,\dots,a_k)$, so 
the monomials in $\widehat\mfree$ generate the $\Z$-module $\zfree $ modulo $\bfree$.  
Suppose that some non-trivial $\Z$-linear combination of the elements in $\widehat\mfree$ 
belongs to $\bfree$. The above direct sum decompositions of  $\zfree$ and $\bfree$ imply then that there exist $q,k\in \N$, and $0\le a_1<\cdots<a_k\le n-1$ such that  
$q[a_1,\dots,a_k]\in \bfree(a_1,\dots,a_k)$. This means that 
\begin{align}\label{eq:epsilon} 
q[a_1,\dots,a_k]=\sum_{i=1}^s\varepsilon_i (w_i+w_i^{\pi_i})
\end{align}  where 
$\varepsilon_i=\pm 1$, $w_i\in \zfree(a_1,\dots,a_k)\cap \mfree$ and $\pi_i\in S_k$ is a transposition for $i=1,\dots,s$. Suppose that $s$ in \eqref{eq:epsilon} is minimal possible. Without loss of generality we may assume that $w_1=[a_1,\dots,a_k]$ and $\varepsilon_1=1$. The word $w_1^{\pi_1}$ must be canceled by  some summand $\varepsilon_i(w_i+w_i^{\pi_i})$ with $i\ge 2$ on the right hand side of \eqref{eq:epsilon}, so after a possible renumbering we have $\varepsilon_2(w_2+w_2^{\pi_2})=-(w_1^{\pi_1}+w_1^{\pi_1\pi_2})$. 
Now the term $-w_1^{\pi_1\pi_2}$ must be canceled by $w_1$ or by some summand  
$\varepsilon_i(w_i+w_i^{\pi_i})$ with $i\ge 3$. It means that the right hand side of 
\eqref{eq:epsilon} has a subsum of the form 
\begin{align}\label{eq:subsum}
(w_1+w_1^{\pi_1})-(w_{1}^{\pi_1}+w_1^{\pi_1\pi_2})+(w_1^{\pi_1\pi_2}+w_1^{\pi_1\pi_2\pi_3})-+\cdots+(-1)^{r-1}(w_1^{\pi_1\cdots\pi_{r-1}}+
w_1^{\pi_1\cdots \pi_r})
\end{align} 
where $w_1^{\pi_1\cdots\pi_r}=w_1$. This latter equality forces that $\pi_1\cdots \pi_r$ is the identity permutation, so $r$ is even, and then the sum \eqref{eq:subsum} is zero. 
So all these terms can be omitted from \eqref{eq:epsilon}. This contradicts the minimality of $s$. 
This shows that $q[a_1,\dots,a_k]$ is not contained in $\bfree$. 
\end{proof}

\begin{lemma}\label{lemma:a1} 
Let $k$ be a positive integer, $a_1\le a_2\le \dots\le a_k\in \N_0$, and $r\in\N_0$ with 
$a_1+k+r>n$. 
Then 
\begin{equation}\label{eq:sumsum}
\sum_{c_1+\cdots+c_k=r}\sum_{\pi\in S_k}[a_1+c_{\pi(1)},\dots,a_k+c_{\pi(k)}]\in\bfree.
\end{equation}  
\end{lemma}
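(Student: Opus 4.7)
The plan is to use the free $\Z$-module description of $\zfree/\bfree$ from Lemma~\ref{lemma:bproperties}(v) and reduce the statement to the vanishing of the determinant of a staircase $0/1$-matrix, then to exploit the hypothesis $a_1+k+r>n$ via a telescoping inequality.

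First I simplify the left-hand side. For each fixed $\pi\in S_k$ the substitution $c'_i=c_{\pi(i)}$ is a bijection on compositions of $r$, so
\[\sum_{|c|=r}[a_1+c_{\pi(1)},\dots,a_k+c_{\pi(k)}]=\sum_{|c'|=r}[a_1+c'_1,\dots,a_k+c'_k]\]
is independent of $\pi$. Summing over $\pi\in S_k$, the left-hand side of \eqref{eq:sumsum} equals $k!\cdot N$ where $N:=\sum_{|c|=r}[a_1+c_1,\dots,a_k+c_k]$, so it suffices to prove $N\in\bfree$.

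To check this I compute coefficients modulo $\bfree$. By Lemma~\ref{lemma:bproperties}(v) the quotient $\zfree/\bfree$ is free on the images of the strictly increasing tuples $[e_1,\dots,e_k]$ with $0\le e_1<\cdots<e_k\le n-1$. For any $(b_1,\dots,b_k)$ with distinct entries all less than $n$, letting $e_1<\cdots<e_k$ be the sort of $b$ and $\tau\in S_k$ be the permutation with $b_i=e_{\tau(i)}$, one has $[b_1,\dots,b_k]=[e_1,\dots,e_k]^\tau\equiv\mathrm{sgn}(\tau)\,[e_1,\dots,e_k]$ modulo $\bfree$; all other tuples vanish modulo $\bfree$. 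Applying this termwise to $N$ with $b_i=a_i+c_i$, only basis tuples with $\sum e_i=\sum a_i+r$ can contribute, and the coefficient of such an $[e_1,\dots,e_k]$ equals
\[\sum_{\substack{\tau\in S_k\\ e_{\tau(i)}\ge a_i\ \forall i}}\mathrm{sgn}(\tau)=\det M^e,\]
where $M^e$ is the $k\times k$ matrix with $M^e_{ij}=1$ if $e_j\ge a_i$ and $0$ otherwise.

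Finally I show $\det M^e=0$. Since $a_i$ is non-decreasing and $e_j$ is strictly increasing, $f(i):=\min\{j:e_j\ge a_i\}$ (with $f(i)=k+1$ if undefined) is non-decreasing, so $M^e$ is a staircase matrix. If some row of $M^e$ is zero, or if $f(i)=f(i+1)$ for some $i$ (so two rows coincide), then $\det M^e=0$ directly. Otherwise $f$ is a strictly increasing self-map of $\{1,\dots,k\}$, forcing $f(i)=i$, which means $a_i\le e_i$ and $e_{i-1}<a_i$ for $i\ge 2$. A telescoping estimate then gives
\[r=\sum_{i=1}^{k}(e_i-a_i)\le\sum_{i=1}^{k-1}(a_{i+1}-1-a_i)+(n-1-a_k)=n-a_1-k,\]
contradicting $a_1+k+r>n$. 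Hence $\det M^e=0$ for every admissible $e$, so $N$, and therefore the left-hand side of \eqref{eq:sumsum}, lies in $\bfree$. The main obstacle is recognizing each basis-coefficient as a staircase determinant and ruling out the interlaced configuration $f=\mathrm{id}$ via the telescoping bound; the preceding symmetrization is routine bookkeeping.
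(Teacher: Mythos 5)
Your argument is correct, but it is a genuinely different proof from the one in the paper. The paper proceeds by induction on $k$: the terms $[a_1+d_1,\dots,a_k+d_k]$ are split into those with $a_1+d_1<a_2$ (handled by the induction hypothesis applied to the last $k-1$ slots together with Lemma~\ref{lemma:bproperties}~(ii)), those with $a_1+d_1=a_2+d_2$ (generators of $\bfree$), and the rest, which are paired off two by two by swapping the excesses in the first two positions, each pair lying in $\bfree$ by construction. You instead note that the double sum is $k!$ times the single sum $N=\sum_{|c|=r}[a_1+c_1,\dots,a_k+c_k]$, reduce each monomial of $N$ modulo $\bfree$ to $0$ or $\pm[e_1,\dots,e_k]$ with $e$ strictly increasing and bounded by $n-1$, identify the resulting coefficient of each $[e]$ as the determinant of the $0/1$ staircase matrix $M^e_{ij}=[e_j\ge a_i]$, and show this determinant vanishes: either a row is zero or two rows coincide, or else the interlacing $a_i\le e_i$, $e_{i-1}<a_i$ forces $r\le n-a_1-k$ by telescoping, contradicting $a_1+k+r>n$. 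All steps check out (in particular the bijection between permutations $\tau$ with $e_{\tau(i)}\ge a_i$ and the compositions contributing to $[e]$, and the fact that each transposition acts as $-1$ modulo $\bfree$); note that you only use the reduction-to-normal-form part of Lemma~\ref{lemma:bproperties}~(v), not the linear independence, since showing every coefficient is zero already yields $N\in\bfree$, and $N\in\bfree$ trivially gives $k!N\in\bfree$. What each approach buys: the paper's induction is self-contained and uses only the generators of $\bfree$ directly, while yours is non-inductive, gives a closed-form expression (a determinant) for the coefficient of each basis monomial, and makes transparent exactly when the sum fails to lie in $\bfree$, namely when the interlaced configuration $a_i\le e_i<a_{i+1}$ can occur, which the hypothesis excludes.
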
 

\begin{proof} Apply induction on $k$. In the case $k=1$ the element in question in \eqref{eq:sumsum} is $x^{a_1+r}$, which belongs to $\bfree$ by the assumption 
$a_1+1+r>n$.  Suppose next that $k>1$, and the statement of the lemma holds for smaller $k$.  
The terms $[a_1+d_1,\dots,a_k+d_k]$ 
in the sum \eqref{eq:sumsum} can be grouped into three classes: 
\begin{enumerate} 
\item[(A)] $a_1+d_1<a_2$
\item[(B)] $a_1+d_1=a_2+d_2$
\item[(C)] $a_1+d_1\ge a_2$ and $a_1+d_1\neq a_2+d_2$. 
\end{enumerate} 
The sum of the terms of type (A) is a sum of expressions of the form 
\begin{equation}\label{eq:typeA}
x^{a_1+d_1}y\sum_{c_2+\cdots+c_k=r-d_1}\sum_{\pi\in\mathrm{Sym}\{2,\dots,k\}}
[a_2+c_{\pi(2)},\dots,a_k+c_{\pi(k)}].\end{equation} 
Here $a_2+(k-1)+(r-d_1)\ge a_1+k+r >n$, hence by the induction hypothesis 
$\sum_{c_2+\cdots+c_k=r-d_1}\sum_{\pi\in\mathrm{Sym}\{2,\dots,k\}}
[a_2+c_{\pi(2)},\dots,a_k+c_{\pi(k)}]$ belongs to $\bfree$. 
Now by Lemma~\ref{lemma:bproperties} (ii) we conclude that the element in \eqref{eq:typeA} belongs to $\bfree$. 
The terms of type (B) belong to $\bfree$ by construction of $\bfree$. 
Finally, a term $[a_1+d_1,\dots,a_k+d_k]$ of type (C) can be paired off with the term 
$[a_1+e_1,a_2+e_2,a_3+d_3,\dots,a_k+d_k]$ where  
$e_1=a_2-a_1+d_2$ and $e_2=a_1-a_2+d_1$ (so this is also of type  (C)), and the sum of these two terms belongs to $\bfree$ by construction of $\bfree$. 
\end{proof} 

\begin{corollary}\label{cor:r+k}
Let $k$ be a positive integer, $(a_1,\dots,a_k)\in \N_0^k$, and $r\in \N_0$ with $r+k>n$. 
Then 
\[\sum_{c_1+\cdots+c_k=r}\sum_{\pi\in S_k}[a_1+c_{\pi(1)},\dots,a_k+c_{\pi(k)}]\in\bfree.\] 
\end{corollary}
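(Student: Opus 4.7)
The plan is to reduce Corollary~\ref{cor:r+k} to Lemma~\ref{lemma:a1}, whose hypothesis only differs from that of the corollary in assuming the $a_i$'s are non-decreasing. The key is that the sum in the corollary is, up to the $S_k$-action on $\zfree(k)$, exactly the sum treated by the lemma, together with the fact that $\bfree\cap\zfree(k)$ is $S_k$-stable.

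First I would choose a permutation $\tau\in S_k$ such that $a_{\tau(1)}\le \cdots\le a_{\tau(k)}$. Since $a_{\tau(1)}\ge 0$ and $r+k>n$, we have $a_{\tau(1)}+k+r>n$, so Lemma~\ref{lemma:a1} applied to the sorted tuple yields
\[S_\tau:=\sum_{c_1+\cdots+c_k=r}\sum_{\pi\in S_k}[a_{\tau(1)}+c_{\pi(1)},\dots,a_{\tau(k)}+c_{\pi(k)}]\in\bfree.\]

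Next, denoting by $S$ the sum appearing in the corollary, I would verify the reindexing identity $S^\tau=S_\tau$. By the definition of the action,
\[S^\tau=\sum_{c_1+\cdots+c_k=r}\sum_{\pi\in S_k}[a_{\tau(1)}+c_{\pi\tau(1)},\dots,a_{\tau(k)}+c_{\pi\tau(k)}],\]
and substituting $\sigma=\pi\tau$ (a bijection of $S_k$) in the inner sum produces $S_\tau$. This is the only routine obstacle in the argument: there is nothing subtle, but the convention $[b_1,\dots,b_k]^\tau=[b_{\tau(1)},\dots,b_{\tau(k)}]$ must be tracked carefully to avoid a $\tau$ versus $\tau^{-1}$ mix-up.

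Finally, since $S\in \zfree(k)$ and $\bfree\cap \zfree(k)$ is $S_k$-stable by Lemma~\ref{lemma:bproperties}(i), the containment $S^\tau=S_\tau\in\bfree$ immediately gives $S=(S^\tau)^{\tau^{-1}}\in\bfree$, as required. All the substantive combinatorics has already been carried out in the proof of Lemma~\ref{lemma:a1}; the corollary is merely its symmetrization over the coordinate ordering.
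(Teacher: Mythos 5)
Your proposal is correct and follows the paper's own proof exactly: sort the $a_i$ by a permutation, apply Lemma~\ref{lemma:a1} to the sorted tuple (noting $a_{\tau(1)}+k+r>n$ since $a_{\tau(1)}\ge 0$), and conclude via the $S_k$-stability of $\bfree\cap\zfree(k)$ from Lemma~\ref{lemma:bproperties}(i). The only difference is that you write out the reindexing $S^\tau=S_\tau$ explicitly, which the paper leaves implicit.
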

\begin{proof} 
Take a permutation $\rho\in S_k$ such that $a_{\rho(1)}\le \dots \le a_{\rho(k)}$. 
Applying $\rho$ to the element in the statement we get 
\[\sum_{c_1+\cdots+c_k=r}\sum_{\pi\in S_k}[a_{\rho(1)}+c_{\pi(1)},\dots,a_{\rho(k)}+c_{\pi(k)}],\] 
which belongs to $\bfree\cap \zfree(k)$ by Lemma~\ref{lemma:a1}. 
Our statement follows  by Lemma~\ref{lemma:bproperties} (i).  
\end{proof} 

\begin{lemma}\label{lemma:xxx} 
Suppose $1\le k\le n+1$, $w_1,\dots,w_{k-1}\in\mfree$ are monomials having positive degree in $y$, and $a,b\in\N_0$. 
Then 
\begin{equation} \label{eq:xxx} 
x^aP_n(w_1,\dots,w_{k-1},x,\dots,x)x^b\in\bfree.
\end{equation}   
\end{lemma}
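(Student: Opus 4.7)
The plan is to expand $P_n(w_1,\dots,w_{k-1},x,\dots,x)$ combinatorially, strip the leading and trailing $x$-powers from each $w_i$ via Lemma~\ref{lemma:bproperties}(iii), and then invoke a (strengthened form of) Corollary~\ref{cor:r+k} to handle the resulting bracket sum.

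Since the last $n-k+1$ arguments of $P_n$ coincide with $x$, all $(n-k+1)!$ permutations of those positions yield the same word, so
\[
P_n(w_1,\dots,w_{k-1},x,\dots,x)=(n-k+1)!\sum_{\sigma\in S_{k-1}}\sum_{c_0+\cdots+c_{k-1}=n-k+1}x^{c_0}w_{\sigma(1)}x^{c_1}\cdots w_{\sigma(k-1)}x^{c_{k-1}},
\]
and left/right multiplication by $x^a,x^b$ absorbs $a,b$ into the outermost $x$-powers. For each $\sigma$ we write $w_{\sigma(i)}=x^{p_i}v_ix^{q_i}$ so that $v_i$ starts and ends with $y$ (hence $v_i\in y\zfree\cap\zfree y$, or $v_i=y$). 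The fixed-$\sigma$ summand then takes the shape $x^{A_1+c_0}v_1x^{A_2+c_1}v_2\cdots v_{k-1}x^{A_k+c_{k-1}}$ with $A_1=a+p_1$, $A_i=q_{i-1}+p_i$ for $2\le i\le k-1$, and $A_k=q_{k-1}+b$. By Lemma~\ref{lemma:bproperties}(iii) applied with $u_i=v_i$, it suffices to show that the bracket sum
\[
T_\sigma:=\sum_{c_0+\cdots+c_{k-1}=n-k+1}[A_1+c_0,A_2+c_1,\dots,A_k+c_{k-1}]
\]
lies in $\bfree\cap\zfree(k)$ for every $\sigma$.

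Since $T_\sigma$ has $k$ slots and $r:=n-k+1$ with $r+k=n+1>n$, Corollary~\ref{cor:r+k} is the natural tool, but as stated it yields only $k!\,T_\sigma\in\bfree$, which combined with the prefactor $(n-k+1)!$ from $P_n$ is insufficient when $k>(n+1)/2$. The hard part is therefore to upgrade the corollary to the unsymmetrized claim $T_\sigma\in\bfree$. To achieve this, one works modulo $\bfree$ in the basis of $\zfree/\bfree$ given by Lemma~\ref{lemma:bproperties}(v): the coefficient of each basis element $[\beta_1<\cdots<\beta_k]$ in $T_\sigma$ equals, for those $\beta$ with $\sum\beta_j=r+\sum A_i$, the determinant of the $0/1$ matrix $M_{ij}=[\beta_j\ge A_i]$. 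Assuming (after reordering rows) $A_1\le\cdots\le A_k$, non-vanishing of this determinant would force an interlacing $A_1\le\beta_1<A_2\le\beta_2<\cdots<A_k\le\beta_k$, which together with $\beta_k\le n-1$ gives $\sum\beta-\sum A\le n-k-A_1\le n-k$, contradicting $\sum\beta-\sum A=r=n-k+1$. Hence $\det(M)=0$ for every admissible $\beta$, so $T_\sigma\in\bfree$, which completes the proof.
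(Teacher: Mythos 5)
Your argument is correct, and its outer layer is exactly the paper's: you expand $P_n(w_1,\dots,w_{k-1},x,\dots,x)$ with the multiplicity $(n-k+1)!$, peel each $w_{\sigma(i)}$ as $x^{p_i}v_ix^{q_i}$, and reduce via Lemma~\ref{lemma:bproperties}(iii) to a membership statement for a bracket sum. Where you diverge is in handling that bracket sum. The paper works with the $S_k$-symmetrized sum, to which Corollary~\ref{cor:r+k} applies directly; the difference between the symmetrized sum and your unsymmetrized $T_\sigma$ is only the factor $k!$, and the obstacle you flag (that $k!\,T_\sigma\in\bfree$ is ``insufficient when $k>(n+1)/2$'') is not actually there: since $\zfree/\bfree$ is a free, hence torsion-free, $\Z$-module by Lemma~\ref{lemma:bproperties}(v), the relation $k!\,T_\sigma\in\bfree$ already forces $T_\sigma\in\bfree$, with no divisibility of factorials needed -- this is precisely the device the paper itself uses in the proof of Lemma~\ref{lemma:w}. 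Your workaround is nevertheless valid and constitutes a genuinely different proof of the key combinatorial step: you compute the image of $T_\sigma$ in $\zfree/\bfree$ coefficient-by-coefficient in the basis of Lemma~\ref{lemma:bproperties}(v), identify each coefficient with $\det M$ for the staircase matrix $M_{ij}=[\beta_j\ge A_i]$, and kill it by the interlacing $A_1\le\beta_1<A_2\le\beta_2<\cdots<A_k\le\beta_k$ forced by $\det M\ne 0$, which contradicts $\sum\beta-\sum A=n-k+1$. This in fact establishes the unsymmetrized strengthening of Corollary~\ref{cor:r+k} (for any $A\in\N_0^k$ and $r+k>n$ the plain sum over compositions already lies in $\bfree$), bypassing the induction of Lemma~\ref{lemma:a1}; what the paper's route buys is brevity, what yours buys is a sharper intermediate statement. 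One presentational gap: the assertion that nonvanishing of $\det M$ forces the interlacing is stated without proof; it is true (if $\beta_{i-1}\ge A_i$ then the first $i$ rows agree on columns $i-1,\dots,k$ and are linearly dependent, and if $\beta_i<A_i$ then rows $i,\dots,k$ vanish on columns $1,\dots,i$, giving a too-large zero block), but since it is the crux of your upgrade it should be spelled out.
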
 

\begin{proof}
We have $w_i=x^{a_i}u_ix^{b_i}$ where $a_i,b_i\in\N_0$ and $u_i\in y\zfree\cap \zfree y$ or $u_i=y$ ($i=1,\dots,k-1$). Then the element in \eqref{eq:xxx} is 
\[\sum_{\rho\in S_{k-1}}\left((n-k+1)!\sum_{c_1+\cdots+c_k=n-k+1}\sum_{\pi\in S_k}
x^{d_1+c_{\pi(1)}}u_{\rho(1)}x^{d_2+c_{\pi(2)}}u_{\rho(2)}\cdots x^{d_{k-1}+c_{\pi(k-1)}}u_{\rho(k-1)}x^{d_k+c_{\pi(k)} }\right)\]
where $d_1=a+a_{\rho(1)}$, $d_2=a_{\rho(2)}+b_{\rho(1)}$, 
$d_3=a_{\rho(3)}+b_{\rho(2)}$, $d_{k-1}=a_{\rho(k-1)}+b_{\rho(k-2)}$,  
$d_k=b_{\rho(k-1)}+b$. 
The summand  corresponding to $\rho\in S_{k-1}$ in the outer sum is contained in $\bfree$ by Corollary~\ref{cor:r+k} and Lemma~\ref{lemma:bproperties} (iii). 
\end{proof}

\begin{lemma}\label{lemma:w} 
For any $w_1,\dots,w_n\in\mfree$, $w_0,w_{n+1}\in\mfree\cup \{1\}$  we have 
\[w_0P_n(w_1,\dots,w_n)w_{n+1}\in\bfree.\]  
\end{lemma}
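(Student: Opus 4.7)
The plan is to derive Lemma~\ref{lemma:w} from Lemma~\ref{lemma:xxx} through two reductions followed by an extension of the main computation in Lemma~\ref{lemma:xxx}'s proof.

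First I would reduce the outer sandwich to pure powers of $x$. If $w_0$ contains some $y$, write $w_0 = v y x^a$ with $v \in \mfree \cup \{1\}$ and $a \ge 0$; then by Lemma~\ref{lemma:bproperties}(ii) (via $y\bfree, \zfree y \bfree \subset \bfree$) it suffices to prove $x^a P_n(w_1,\ldots,w_n) w_{n+1} \in \bfree$. A symmetric argument on the right (using $\bfree y, \bfree y \zfree \subset \bfree$) reduces us to showing $x^a P_n(w_1,\ldots,w_n) x^b \in \bfree$ for some $a, b \ge 0$. Next, by the symmetry of $P_n$ in its arguments, reorder $(w_1,\ldots,w_n)$ so that $w_1,\ldots,w_{k-1}$ contain $y$ and $w_j = x^{a_j}$ (with $a_j \ge 1$) for $k \le j \le n$. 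The boundary cases are immediate: $k = n+1$ is Lemma~\ref{lemma:xxx} with no trailing $x$-arguments, and $k = 1$ follows from $P_n(x^{a_1},\ldots,x^{a_n}) = n!\, x^{a_1+\cdots+a_n}$ together with $a_1+\cdots+a_n \ge n$, giving $x^{a+b+\sum a_j} \in \bfree$.

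For $2 \le k \le n$ I would adapt the expansion in Lemma~\ref{lemma:xxx}'s proof. Each summand of $P_n(w_1,\ldots,w_{k-1}, x^{a_k},\ldots,x^{a_n})$ is indexed by an ordering $\rho \in S_{k-1}$ of the $y$-content arguments together with an ordered partition $(S_1,\ldots,S_k)$ of $\{k,\ldots,n\}$ assigning each pure $x$-power to one of the $k$ slots around the $y$-content arguments; since the order within a slot does not affect the resulting word (consecutive $x$-powers concatenate), each $(S_1,\ldots,S_k)$ with part sizes $c_j = |S_j|$ arises from exactly $c_1! \cdots c_k!$ permutations in $S_n$. Writing $w_i = x^{a_i} u_i x^{b_i}$ (with $u_i \in y\zfree \cap \zfree y$ or $u_i = y$) as in that proof,
\[x^a P_n(\ldots) x^b = \sum_{\rho \in S_{k-1}} \sum_{(S_1,\ldots,S_k)} c_1! \cdots c_k! \cdot x^{d_1 + E_1} u_{\rho(1)} x^{d_2 + E_2} \cdots u_{\rho(k-1)} x^{d_k + E_k},\]
where $E_j = \sum_{i \in S_j} a_i$ and the $d_j$'s are exactly those of Lemma~\ref{lemma:xxx}'s proof. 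By Lemma~\ref{lemma:bproperties}(iii), it suffices to show, for each fixed $\rho$, that $\sum_{(S_1,\ldots,S_k)} c_1! \cdots c_k! \, [d_1 + E_1, \ldots, d_k + E_k] \in \bfree$.

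The main obstacle is this last step, which extends Corollary~\ref{cor:r+k}. Reparametrizing $(S_1,\ldots,S_k)$ by a permutation $\tau \in S_{n-k+1}$ together with a composition $(c_1,\ldots,c_k)$ of $n-k+1$ (so that each ordered partition arises exactly $c_1! \cdots c_k!$ times), the required statement reads $\sum_\tau \sum_c [d_1 + A_1, \ldots, d_k + A_k] \in \bfree$, where $A_j$ is the sum of the $j$-th consecutive block in the $\tau$-reordering of $(a_k,\ldots,a_n)$. Since each $a_i \ge 1$ yields total length $n-k+1+k = n+1 > n$, I expect this extension to be provable by the same induction on $k$ underlying Lemma~\ref{lemma:a1}, with case-splits on the relative sizes of $d_1 + A_1$ and $d_2$ plus the first element of the next block, yielding analogues of the types (A), (B), (C) used there.
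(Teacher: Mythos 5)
Your reductions are sound and coincide with the opening of the paper's argument: passing to $w_0=x^a$, $w_{n+1}=x^b$ via Lemma~\ref{lemma:bproperties}(ii), sorting the arguments so that $w_1,\dots,w_{k-1}$ contain $y$, the boundary cases, and the expansion of $P_n$ with the multiplicities $c_1!\cdots c_k!$ are all correct. The problem is the final step, which is exactly the hard part of the lemma: you reduce everything to the claim that $\sum_{\tau}\sum_{c}[d_1+A_1,\dots,d_k+A_k]\in\bfree$, where the increments $A_j$ are block sums of the fixed multiset of exponents $a_k,\dots,a_n$, and you only assert that this ``extension'' of Lemma~\ref{lemma:a1}/Corollary~\ref{cor:r+k} should follow by the same induction. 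It does not, at least not as stated. In the proof of Lemma~\ref{lemma:a1} the increments range over \emph{all} compositions of $r$, so in case (C) the partner term whose first two entries are exchanged is guaranteed to occur in the sum with the same multiplicity. In your setting the increment tuples are constrained to be block sums of the given multiset, and the exchanged-entries partner need not be realizable at all (for instance, if all the exponents $a_i$ are even and $d_2-d_1$ is odd, no re-partition of $S_1\cup S_2$ produces the required sums), while the obvious pairing $(S_1,S_2,S_3,\dots)\leftrightarrow(S_2,S_1,S_3,\dots)$ produces an element of the form $w+w^{(12)}$ only when $d_1=d_2$. So the decisive combinatorial statement is left unproved, and the suggested method of proving it breaks precisely where a new idea is needed; the statement itself may be true, but establishing it would require a genuinely different argument for case (C).

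The paper avoids this difficulty entirely. After the same reduction it only needs the case where every $y$-free argument is equal to $x$ (Lemma~\ref{lemma:xxx}, where Corollary~\ref{cor:r+k} applies verbatim), and then it treats higher powers $x^{c}$ by induction on the number $l$ of exponents greater than $1$: one applies the derivation $\delta_{c_l}$ of Lemma~\ref{lemma:bproperties}(iv) to an element already known to lie in $\bfree$, observes that all resulting terms except $(n-k-l+2)\,x^aP_n(w_1,\dots,w_{k-1},x^{c_1},\dots,x^{c_l},x,\dots,x)x^b$ lie in $\bfree$ by the induction hypothesis, and removes the positive integer coefficient using the torsion-freeness of $\zfree/\bfree$ from Lemma~\ref{lemma:bproperties}(v). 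If you want to keep your route you must actually prove your multiset version of Corollary~\ref{cor:r+k}; otherwise the derivation trick is the missing ingredient.
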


\begin{proof} By Lemma~\ref{lemma:bproperties} (ii) it is sufficient to deal with the case 
$w_0=x^a$, $w_{n+1}=x^b$. We may assume that $w_1,\dots,w_{k-1}$ have positive degree in $y$ and $w_{k-1+j}=x^{c_j}$ for $j=1,\dots,n-k+1$. If $n-k+1=0$ or all the $c_j=1$ then we are done by Lemma~\ref{lemma:xxx}. Suppose next  that $n-k+1>0$, $c_1,\dots,c_l>1$ with $l\ge 1$, and $c_{l+1}=\cdots =c_{n-k+1}=1$. By induction on $l$ we show that 
$x^aP_n(w_1,\dots,w_{k-1},x^{c_1},\dots,x^{c_l},x,\dots,x)x^b\in\bfree$. 
By the induction hypothesis (or by Lemma~\ref{lemma:xxx} when $l=1$) $f=x^aP_n(w_1,\dots,w_{k-1},x^{c_1},\dots,x^{c_{l-1}},x,\dots,x)x^b\in\bfree$, hence by Lemma~\ref{lemma:bproperties} (iv) 
$\delta_{c_l}(f)\in \bfree$. 
We have 
\begin{align*}
\delta_{c_l}(f)&=
ax^{a+c_l-1}P_n(w_1,\dots,w_{k-1},x^{c_1},\dots,x^{c_{l-1}},x,\dots,x)x^b
\\ &+\sum_{i=1}^{k-1}x^aP_n(w_1,\dots,\delta_{c_l}(w_i),\dots,w_{k-1},x^{c_1},\dots,x^{c_{l-1}},x,\dots,x)x^b
\\&+ \sum_{j=1}^{l-1}c_jx^aP_n(w_1,\dots,w_{k-1},x^{c_1},\dots,x^{c_j+c_l-1}, \dots, x^{c_{l-1}},x,\dots,x)x^b
\\ &+(n-k-l+2)x^aP_n(w_1,\dots,w_{k-1},x^{c_1},\dots,x^{c_l},x,\dots,x)x^b
\\ &+bx^aP_n(w_1,\dots,w_{k-1},x^{c_1},\dots,x^{c_{l-1}},x,\dots,x)x^{b+c_l-1}. 
\end{align*} 
All other terms than 
$(n-k-l+2)x^aP_n(w_1,\dots,w_{k-1},x^{c_1},\dots,x^{c_l},x,\dots,x)x^b$ 
on the right hand side above belong to $\bfree$ by the induction hypothesis. 
Taking into account that $\zfree/\bfree$ is torsion free by Lemma~\ref{lemma:bproperties} (v) 
we conclude the desired inclusion 
\[x^aP_n(w_1,\dots,w_{k-1},x^{c_1},\dots,x^{c_l},x,\dots,x)x^b\in \bfree.\] 
\end{proof} 

For $\lambda=(\lambda_1,\dots,\lambda_m)\in \N_0^m$ denote by  
$P_{\lambda}(x_1,\dots,x_m)\in \Z\langle x_1,\dots,x_m\rangle$ 
the multihomogeneous component of $(x_1+\cdots+x_m)^n$ having $\Z^m$-degree 
$\lambda$. 

\begin{corollary}\label{cor:lambda} 
For any $m\in\N$, $w_1,\dots,w_m\in \mfree$, $w_0,w_{m+1}\in \mfree\cup\{1\}$ and for 
any $\lambda\in \N_0^m$ 
we have that 
\[w_0P_{\lambda}(w_1,\dots,w_m)w_{m+1}\in\bfree.\]  
\end{corollary}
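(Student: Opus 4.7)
The plan is to reduce Corollary~\ref{cor:lambda} directly to Lemma~\ref{lemma:w} by exploiting the well-known identity that relates the partial polarization $P_\lambda$ to the complete linearization $P_n$ via a factorial multiplicity.

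First I would dispose of the trivial case: if $\lambda_1+\cdots+\lambda_m\neq n$ then $P_\lambda$ vanishes, since it is a multihomogeneous component of a homogeneous polynomial of total degree $n$. So I may assume $\lambda_1+\cdots+\lambda_m=n$. Next, I would substitute the $n$-tuple
\[(\underbrace{w_1,\ldots,w_1}_{\lambda_1},\underbrace{w_2,\ldots,w_2}_{\lambda_2},\ldots,\underbrace{w_m,\ldots,w_m}_{\lambda_m})\]
into $P_n$. Collecting identical terms, each monomial $w_{i_1}\cdots w_{i_n}$ with $|\{j:i_j=k\}|=\lambda_k$ is produced exactly $\lambda_1!\cdots\lambda_m!$ times (once for each permutation within every block of equal entries), yielding the identity
\[P_n(\underbrace{w_1,\ldots,w_1}_{\lambda_1},\ldots,\underbrace{w_m,\ldots,w_m}_{\lambda_m})=\lambda_1!\cdots\lambda_m!\,P_\lambda(w_1,\ldots,w_m).\]

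Now I would apply Lemma~\ref{lemma:w} to the left-hand side (with $w_0$ and $w_{m+1}$ as the bordering words), concluding that
\[\lambda_1!\cdots\lambda_m!\cdot w_0 P_\lambda(w_1,\ldots,w_m)w_{m+1}\in\bfree.\]
Finally, the torsion-freeness of $\zfree/\bfree$ granted by Lemma~\ref{lemma:bproperties}(v) lets me cancel the nonzero integer $\lambda_1!\cdots\lambda_m!$ and deduce $w_0P_\lambda(w_1,\ldots,w_m)w_{m+1}\in\bfree$. The main (and really only) conceptual point is that torsion-freeness modulo $\bfree$ allows division by the factorial, which is precisely the substitute for the characteristic-zero assumption in the classical arguments; everything else is a single polarization identity. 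No serious obstacle is expected.
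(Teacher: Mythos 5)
Your proof is correct and follows essentially the same route as the paper: the polarization identity $P_n(\underbrace{w_1,\dots,w_1}_{\lambda_1},\dots,\underbrace{w_m,\dots,w_m}_{\lambda_m})=\lambda_1!\cdots\lambda_m!\,P_{\lambda}(w_1,\dots,w_m)$ combined with Lemma~\ref{lemma:w}, then cancellation of the factorial using the torsion-freeness of $\zfree/\bfree$ from Lemma~\ref{lemma:bproperties} (v). This is exactly the paper's argument, with the degenerate case $\lambda_1+\cdots+\lambda_m\neq n$ handled explicitly as a harmless extra remark.
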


\begin{proof}
We have the equality 
\[P_{\lambda}(x_1,\dots,x_m)=\frac{1}{\prod_{i=1}^m(\lambda_i!) }
P_n(\underbrace{x_1,\dots,x_1}_{\lambda_1},\dots,\underbrace{x_m,\dots,x_m}_{\lambda_m}).\] 
Therefore the statement follows from Lemma~\ref{lemma:w} by Lemma~\ref{lemma:bproperties} (v). 
\end{proof} 

\begin{proposition}\label{prop:B} 
The ideal $I_{n,2}$ is contained in the subspace $\F\otimes_{\Z}\bfree$ of 
$\F\langle x,y\rangle$. 
\end{proposition}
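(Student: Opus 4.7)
The plan is to quote Corollary~\ref{cor:lambda} after rewriting an arbitrary element of $I_{n,2}$ as a linear combination of the expressions to which that corollary applies. Since $I_{n,2}$ is the two-sided ideal of $\F\langle x,y\rangle$ generated by $\{a^n\mid a\in\free_2^+\}$, it is spanned as an $\F$-vector space by products $u\,a^n\,v$ with $a\in\free_2^+$ and $u,v\in\free_2$. By $\F$-linearity in $u$ and $v$ we may further assume $u,v\in\mfree\cup\{1\}$, so it suffices to show that every such product lies in $\F\otimes_\Z\bfree$.

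The next step is to expand $a^n$. Writing $a=\sum_{i=1}^m \alpha_i w_i$ with pairwise distinct monomials $w_i\in\mfree$ and coefficients $\alpha_i\in\F$, the multinomial expansion yields
$$a^n\;=\;\sum_{\lambda\in\N_0^m,\ |\lambda|=n}\Big(\prod_{i=1}^m\alpha_i^{\lambda_i}\Big)\,P_\lambda(w_1,\dots,w_m),$$
because $P_\lambda$ is by definition the multihomogeneous component of $(x_1+\cdots+x_m)^n$ of $\Z^m$-degree $\lambda$, i.e.\ the sum of all arrangements in which $w_i$ occurs exactly $\lambda_i$ times.

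Multiplying by $u$ on the left and $v$ on the right,
$$u\,a^n\,v\;=\;\sum_{\lambda}\Big(\prod_i\alpha_i^{\lambda_i}\Big)\,u\,P_\lambda(w_1,\dots,w_m)\,v,$$
and each term on the right lies in $\F\otimes_\Z\bfree$ by Corollary~\ref{cor:lambda} (applied with $w_0=u$, $w_{m+1}=v$). Since $\F\otimes_\Z\bfree$ is an $\F$-subspace of $\F\langle x,y\rangle$, the inclusion $I_{n,2}\subset \F\otimes_\Z\bfree$ follows. I do not expect any real obstacle at this stage: all the combinatorial content is already packaged in Lemmas~\ref{lemma:a1}--\ref{lemma:w} and Corollary~\ref{cor:lambda}, and the present proposition is simply the bookkeeping that promotes the single-monomial statement of that corollary into an ideal-theoretic statement via multinomial expansion.
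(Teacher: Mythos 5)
Your proposal is correct and follows essentially the same route as the paper: span $I_{n,2}$ by elements $w_0\,a^n\,w_{m+1}$ with $a$ a linear combination of distinct nonempty monomials, expand $a^n$ multinomially into the $P_\lambda(w_1,\dots,w_m)$, and invoke Corollary~\ref{cor:lambda}. No gaps.
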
 

\begin{proof}
The ideal $I_{n,2}$ is spanned as an $\F$-vector space by 
elements of the form 
\[w_0(c_1w_1+\cdots+c_mw_m)^nw_{m+1},\] 
where 
the $w_i$ are monomials in $x,y$ and they have positive total degree for $i=1,\dots,m$, and $c_1,\dots,c_m\in \F$.   
Since we have the equality 
\[(c_1w_1+\cdots+c_mw_m)^n=\sum_{\lambda\in\N_0^m, \ \lambda_1+\cdots+\lambda_m=n}c_1^{\lambda_1}\cdots c_m^{\lambda_m}P_{\lambda}(w_1,\dots,w_m),\] 
our statement follows from Corollary~\ref{cor:lambda}. 
\end{proof} 

\bigskip
\begin{proofof}{Theorem~\ref{thm:kuzmin}} 
By Lemma~\ref{lemma:bproperties} (v) the monomials 
\[\{x^{a_1}yx^{a_2}yx^{a_3}\cdots yx^{a_k}\mid 0\le a_1<a_2<\cdots<a_k\le n-1\}\] 
are linearly independent in $\free_2=\F\langle x,y\rangle$ modulo the subspace 
$\F\otimes_{\Z}\bfree$. Since $\F\otimes_{\Z}\bfree$ contains the ideal $I_{n,2}$ by 
Proposition~\ref{prop:B}, our statement follows.  
\end{proofof}
%%%%%%%%%%%%%%%%%%%%%%%%%%%

\end{document}